\newtheorem{teorema}{Theorem}
\newtheorem{lema}{Lemma}
\newtheorem{prop}{Proposition}
\newcommand{\ZZ}{\mathbb{Z}}
\newcommand{\RR}{\mathbb{R}}
\newcommand{\CC}{\mathbb{C}}
\newcommand{\Ss}{\mathbb{S}}
\newcommand{\TT}{\mathbb{T}}
\newcommand{\OO}{\mathbb{O}}
\newcommand{\one}{\mathbbm{1}}
\newcommand{\rr}{{\mathcal R}}
\newcommand{\ri}{{\mathcal I}}
\newcommand{\Real}{{\mathcal R}e}
\newcommand{\Imag}{{\mathcal I}m}
\newcommand{\tetra}{\langle\TT ,\kappa\rangle}
\newcommand{\cubo}{\langle\OO ,-Id\rangle}
\DeclareMathOperator{\fix}{Fix}
\begin{document}

\title[Hopf bifurcation with tetrahedral and octahedral symmetry]{Hopf bifurcation with tetrahedral and octahedral symmetry}

\author[I.S. Labouriau, A.C. Murza]{Isabel Salgado Labouriau, Adrian C. Murza}
\address{I.S. Labouriau --- Centro de Matem\'atica da Universidade do Porto.\\ Rua do Campo Alegre 687, 4169-007 Porto, Portugal\\ A.C. Murza --- Departamento de Matem\'atica, Faculdade de Ci\^encias\\
Universidade do Porto.\\ Rua do Campo Alegre 687, 4169-007 Porto, Portugal}

\date{\today}
\begin{abstract}
In the study of the periodic solutions of a $\Gamma$-equivariant dynamical system, the $H~\mathrm{mod}~K$ theorem gives all possible periodic solutions, based on  group-theoretical aspects. By contrast, the equivariant Hopf theorem guarantees the existence of families of small-amplitude periodic solutions bifurcating from the origin for each $\mathbf{C}$-axial subgroup of $\Gamma\times\Ss^1$. 
In this article we compare the bifurcation of periodic solutions for generic differential equations equivariant under   the full group of symmetries of the tetrahedron and the group of rotational symmetries of the cube.
The two groups are the image of inequivalent representations of the symmetric group $S_4$.
The possible  spatial symmetries of bifurcating solutions are different, even though the two groups yield the same group of matrices  $\Gamma\times\Ss^1$.
The same group of matrices occurs again as the extension $\Gamma\times\Ss^1$ when  $\Gamma$ is  the full group of symmetries of the cube.
 For these three groups,  while characterizing the Hopf bifurcation, we identify which periodic solution types, whose existence is guaranteed by the $H~\mathrm{mod}~K$ theorem, are obtainable by Hopf bifurcation from the origin.
\end{abstract}

\maketitle

Keywords:
equivariant dynamical system ;  tetrahedral symmetry; octahedral symmetry; periodic solutions ;  Hopf bifurcation.

\subjclass[2010] {37C80 ;  37G40 ;  34C15 ;  34D06 ;  34C15}

\section{Introduction}\label{section Introduction}
The formalism of $\Gamma$-equivariant differential equations, ie. those equations whose associated vector field commutes with the action of a  compact
 group $\Gamma$ has been developed by Golubitsky, 
Stewart and Schaeffer in \cite{GS85}, \cite{GS88} and \cite{GS03}. Within this formalism, two methods for obtaining periodic solutions have been described: the $H~\mathrm{mod}~K$ theorem   \cite[Ch.3]{Buono,GS03} and the equivariant Hopf theorem  \cite[Ch.4]{GS03}. 
The equivariant Hopf theorem guarantees the existence of families of small-amplitude periodic solutions bifurcating from the origin for all $\mathbf{C}$-axial subgroups of $\Gamma\times\Ss^1$, under some generic conditions.
The $H~\mathrm{mod}~K$ theorem offers the complete set of possible periodic solutions based exclusively on the structure of the group $\Gamma$ acting on the differential equation.
 It also guarantees the existence of a model with this symmetry having these periodic solutions, but it is not an existence result for any specific equation. 
 
 In this article, we discuss  the cases when $\Gamma$ is the tetrahedral group $\tetra$ of symmetries of the  tetrahedron and  when $\Gamma$ is the octahedral group $\OO$ of  rotational symmetries of the cube.
As abstract groups, $\tetra$ and $\OO$ are isomorphic to the symmetric group $S_4$,
but they correspond to nonequivalent representations both in $\RR^3$ and $\CC^3$,
generating nonconjugate subgroups of $\mathbf{O}(3)$ and $\mathbf{U}(3)$, respectively.
Identifying $\CC^3$ to $\RR^6$ yields two nonconjugate subgroups of $\mathbf{O}(6)$.

For Hopf bifurcation, the relevant action is that of $\Gamma\times\Ss^1$, where $\theta\in\Ss^1$ acts by multiplication by $e^{i\theta}$ in $\CC^3\sim\RR^6$.
It turns out that the matrix groups $\tetra\times\Ss^1$ and $\OO\times\Ss^1$ coincide, although they correspond to inequivalent representations of $S_4\times\Ss^1$.
The analysis of equivariant Hopf bifurcation is the same for both groups, but the  interpretation is not the same for the two different representations of $S_4$ and hence the periodic orbits have different spatial and spatio-temporal symmetries.
In addition, for  the full  group $\Gamma=\cubo$ of symmetries of the cube, the matrix group $\Gamma\times\Ss^1$ is also the same as those above. 
As a consequence, we have one setup for bifurcation in $\RR^6$ with three different interpretations depending on which  representation we consider.

This raises the general question of the dynamical interpretation of equivariant bifurcation data for different representations of the same group that correspond to the same matrix group.
A simple example is the group $\ZZ_3$ where the generator $\gamma\in\ZZ_3$ is mapped into $e^{2\pi i/3}$ in one representation, and to $e^{4\pi i/3}$ in another. The representations are not equivalent, but their image is the same matrix group and {\sl a fortiori} the extension  to $\ZZ_3\times\Ss^1$ has the same property.
The situation in this article is even more special, in that the representations of $S_4$ have different images, but the image matrix groups for the extensions $S_4\times\Ss^1$ coincide, and they also coincide with the image of the extension of another group.
A similar situation  arises if the matrix groups of two inequivalent representations do not coincide but are conjugate within the group $\mathbf{O}(n)$. 
The  conclusion would be the same, but it would not be so evident because the matrix groups would be different.
The results in the present article  indicate that this is a general problem worth studying.

\bigbreak

Steady-state bifurcation problems with octahedral symmetry  are analysed by Melbourne \cite{Melbourne} using results from singularity theory. 
For non-degenerate bifurcation problems equivariant with respect to the standard action on $\RR^3$ of the octahedral group he finds three branches of symmetry-breaking steady-state bifurcations corresponding to the three maximal isotropy subgroups with one-dimensional fixed-point subspaces. 
Hopf bifurcation with the rotational symmetry of the tetrahedron is studied by Swift and Barany \cite{barany},
motivated by problems in fluid dynamics.
 They find periodic branches created at Hopf bifurcation, as well as 
 evidence of chaotic dynamics, bifurcating directly from the equilibrium --- instant chaos.
Generic Hopf bifurcation with the
rotational symmetries of the cube is studied by Ashwin and Podvigina \cite{Ashwin}, also with the motivation of fluid dynamics. 
They also find evidence of chaotic dynamics, this time arising from secondary bifurcations from periodic branches created at Hopf bifurcation.
Also relevant for this study are the extensive discussions of subgroups of $\mathbf{O}(3)$ and $\mathbf{SO}(3)$  in articles by Ihrig and Golubitsky \cite{IG} and by Chossat and Lauterbach \cite{CLO3}, as well as in the books by Golubitsky, 
Stewart and Schaeffer  \cite{GS88} and by Chossat and Lauterbach \cite{CL}.

Solutions predicted by the $H~\mathrm{mod}~K$ theorem cannot always be obtained by a generic Hopf bifurcation from the trivial equilibrium. When the group is finite abelian, the periodic solutions whose existence is allowed by the $H~\mathrm{mod}~K$ theorem that are realizable from the equivariant Hopf theorem are described in \cite{Abelian Hopf}.

In this article, we pose a more specific question: which periodic solutions predicted by the $H~\mathrm{mod}~K$ theorem are obtainable by  Hopf bifurcation from the trivial steady-state when  $\Gamma$ is one of the groups discussed above?

We will answer this question by finding for each group that not all periodic solutions predicted by the $H~\mathrm{mod}~K$ theorem occur at  Hopf bifurcations from the trivial equilibrium. 
For this we analyse  bifurcations taking place in four-dimensional invariant subspaces and giving rise to periodic solutions with very small symmetry groups.
In particular, we find that 
some solutions predicted by the  $H~\mathrm{mod}~K$ theorem for $\tetra$-equivariant vector fields 
are not compatible with the $\Gamma\times\Ss^1$ action, and if they bifurcate from an equilibrium, they either have more symmetry or they arise at a resonant Hopf bifurcation.
This is also the case for $\cubo$.

\subsection*{Framework of the article}
The relevant actions of the groups $\Gamma=\tetra$ of symmetries of the tetrahedron, $\Gamma=\OO$ of rotational symmetries of the cube, $\Gamma=\cubo$ of all symmetries of the cube and of $\Gamma\times\Ss^1$ on $\RR^6\sim\CC^3$ are described in Section~\ref{section 1}, after stating some preliminary results and definitions in Section~\ref{secPreliminary}.
Hopf bifurcation is treated in Section~\ref{secHopf}, where we  summarise some of the results  of Swift and Barany \cite{barany}  on $\TT\times\Ss^1$ and of
Ashwin and Podvigina \cite{Ashwin} on $\OO\times\Ss^1$, together with the formulation of the same results 
 when the group is interpreted as $\tetra\times\Ss^1$ and as $\cubo\times\Ss^1$.
 This includes the analysis of Hopf bifurcation inside  fixed-point subspaces for submaximal isotropy subgroups, one of which we perform in more detail than in \cite{barany,Ashwin}, giving a geometric proof of the existence of  up to
 three 
branches of   submaximal periodic solutions, for some values of the parameters in a degree three normal form.
Finally, we apply the
$H~\mathrm{mod}~K$ theorem in Section~\ref{sectionSpatioTemporal} where we compare the bifurcations for the  three group actions.

\section{Preliminary results and definitions}\label{secPreliminary}
 We start by giving some definitions from  Golubitsky and Stewart's book \cite{GS03}. 
The reader is referred to this book and to Chossat and Lauterbach's  \cite{CL} for results on bifurcation with symmetry.

Let $\Gamma$ be a compact Lie group. A representation  of  $\Gamma$ on a vector space $W$ is \emph{$\Gamma$-simple } if either:
\begin{itemize}
\item [(a)] $W\sim V\oplus V$ where $V$ is absolutely irreducible for $\Gamma,$ or
\item [(b)]  the action of $\Gamma$ on $W$ is irreducible but not absolutely irreducible.
\end{itemize}

Let $W$ be a $\Gamma$-simple representation and let
$f$ be a $\Gamma$-equivariant vector field in $W$.
Then it follows \cite[Ch. XVI, Lemma1.5]{GS88} that
  if the Jacobian matrix $(df)_{0}$ of $f$ evaluated at the origin has purely imaginary eigenvalues $\pm \omega i$,
 then in suitable coordinates $(df)_{0}$ has the form:
 $$
(df)_{0}= \omega J=\omega \begin{bmatrix}
0&-Id\\
Id&0
\end{bmatrix}
$$
where $Id$ is the identity matrix.
Consider the action of $\Ss^1$  on $W$ given by $\theta x=e^{i\theta J}x$.
A subgroup $\Sigma\subseteq\Gamma\times\Ss^1$ is \emph{$\CC$-axial} if $\Sigma$ is an isotropy subgroup and
$\dim\fix(\Sigma)=2$.

Let $\dot{x}=f(x)$ be a $\Gamma$-equivariant differential equation with a $T$-periodic solution $x(t).$ We call $(\gamma,~\theta)\in\Gamma\times\Ss^1$ a spatio-temporal symmetry of the solution $x(t)$ if $\gamma\cdot x(t+\theta)=x(t)$  for all $t$. A spatio-temporal symmetry of the solution $x(t)$ for which $\theta=0$ is called a spatial symmetry, since it fixes the point $x(t)$ at every moment of time.
 
 The main tool here will be the following theorem.

\begin{teorema}
[Equivariant Hopf Theorem \cite{GS03}] Let a compact Lie group $\Gamma$ act $\Gamma$-simply, orthogonally and nontrivially on  $\RR^{2m}$.
Assume that
\begin{itemize}
\item [(a)] $f:\RR^{2m}\times\RR\rightarrow\RR^{2m}$ is $\Gamma$-equivariant. Then $f(0,\lambda)=0$ and $(df)_{0,\lambda}$ has eigenvalues $\sigma(\lambda)\pm i\rho(\lambda)$ each of multiplicity $m;$
\item [(b)] $\sigma(0)=0$ and $\rho(0)=1;$
\item [(c)] $\sigma'(0)\neq0$ the eigenvalue crossing condition;
\item [(d)] $\Sigma\subseteq\Gamma\times\Ss^1$ is a $\CC$-axial subgroup.
\end{itemize}
Then there exists a unique branch of periodic solutions with period $\approx2\pi$ emanating from the origin, with spatio-temporal symmetries $\Sigma.$
\end{teorema}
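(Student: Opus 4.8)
The plan is to reduce the problem to a finite-dimensional bifurcation equation by a $\Gamma\times\Ss^1$-equivariant Lyapunov--Schmidt procedure on a loop space, and then to solve that equation on the two-dimensional subspace $\fix(\Sigma)$.

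First I would rescale time so that a solution of period $2\pi/(1+\tau)$ becomes a $2\pi$-periodic solution of $(1+\tau)\,dx/ds=f(x,\lambda)$, treating $\tau$ as an auxiliary parameter alongside $\lambda$. On the Banach space of $2\pi$-periodic $C^1$ maps $\RR\to\RR^{2m}$, the map $\Phi(x,\lambda,\tau)=(1+\tau)\,dx/ds-f(x,\lambda)$ vanishes exactly on the periodic solutions sought, and it commutes with the action of $\Gamma\times\Ss^1$ in which $\Gamma$ acts pointwise and $\theta\in\Ss^1$ acts by the phase shift $x(s)\mapsto x(s+\theta)$. Its linearization at $x=0$, $\lambda=\tau=0$ is $L=d/ds-(df)_{0,0}=d/ds-J$. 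Since $J^2=-Id$, every solution of $dx/ds=Jx$ is $2\pi$-periodic, and by (a) these account for all eigenvalues of $(df)_{0,0}$, so there is no resonance and $\ker L=\{s\mapsto e^{sJ}v:v\in\RR^{2m}\}$ is $2m$-dimensional; on it the phase-shift action of $\Ss^1$ is exactly the action fixed in the statement. Thus $\ker L\cong\RR^{2m}$ as a $\Gamma\times\Ss^1$-representation, and $L$ is Fredholm of index zero.

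Next I would perform the Lyapunov--Schmidt reduction, averaging the projections over the compact group $\Gamma\times\Ss^1$ so that it is equivariant; this yields a smooth $\Gamma\times\Ss^1$-equivariant map $g:\RR^{2m}\times\RR^2\to\RR^{2m}$ with $g(0,0,0)=0$, $(d_vg)_{0,0,0}=0$, whose small zeros correspond bijectively and equivariantly to small $2\pi$-periodic solutions of the rescaled equation. I then restrict $g$ to $\fix(\Sigma)$, which by (d) is two-dimensional. The structural point is that the central circle $\{1\}\times\Ss^1$ normalizes $\Sigma$ and therefore acts on $\fix(\Sigma)\cong\CC$; this action is nontrivial, since $\{1\}\times\Ss^1$ has no nonzero fixed vectors in $\RR^{2m}$, so it acts as the full group $\mathbf{SO}(2)$ of rotations. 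Hence $g|_{\fix(\Sigma)}$ is $\Ss^1$-equivariant on $\CC$ and must have the form $g(z,\lambda,\tau)=\bigl(a(|z|^2,\lambda,\tau)+i\,b(|z|^2,\lambda,\tau)\bigr)z$ with $a,b$ smooth and real. Nonzero zeros of $g$ are then given by $a(|z|^2,\lambda,\tau)=0=b(|z|^2,\lambda,\tau)$; reading off the linearization gives $a(0,0,0)=\sigma(0)=0$ and, after normalizing the frequency, $b(0,0,0)=0$, while $\partial b/\partial\tau\neq0$ and, by the eigenvalue crossing condition (c), $\partial a/\partial\lambda=\sigma'(0)\neq0$ at the origin. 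Solving $b=0$ for $\tau$ and then the residual equation for $\lambda$ by the implicit function theorem produces a unique branch parametrised by the amplitude $|z|$, with $\lambda\to0$ as $|z|\to0$.

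Finally I would translate back: this branch in $\fix(\Sigma)$ lifts, through the reduction, to a unique branch of small $2\pi$-periodic solutions of the rescaled system and hence of periodic solutions of period near $2\pi$ of $\dot x=f(x,\lambda)$. Because the reduction is equivariant, the isotropy of each solution equals that of the corresponding point of $\fix(\Sigma)$, which is $\Sigma$ since $\Sigma$ is $\mathbf{C}$-axial and is the isotropy of the generic point of its fixed-point subspace; thus the solutions have spatio-temporal symmetry group exactly $\Sigma$. Uniqueness follows because any small periodic solution whose symmetries contain $\Sigma$ corresponds to a zero of $g$ lying in $\fix(\Sigma)$. I expect the main obstacle to be the careful bookkeeping of the equivariant Lyapunov--Schmidt step in the presence of the period parameter, together with the precise check that (c) persists as the nondegeneracy $\partial a/\partial\lambda\neq0$ of the reduced equation; the identification of the residual rotation action on $\fix(\Sigma)$, while essential, is comparatively routine.
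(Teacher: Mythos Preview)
The paper does not prove this theorem at all: it is quoted verbatim from \cite{GS03} as a background tool, with no argument supplied. So there is no ``paper's own proof'' to compare against.

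Your sketch is essentially the standard proof given in the cited reference (and, in more detail, in \cite{GS88}, Chapter~XVI): equivariant Lyapunov--Schmidt reduction on the loop space after rescaling the period, identification of the kernel with $\RR^{2m}$ carrying the $\Gamma\times\Ss^1$ action, restriction of the reduced map to the two-dimensional $\fix(\Sigma)$, use of the residual $\Ss^1$-action to put the restricted map in the form $(a+ib)z$, and solution by the implicit function theorem using $\sigma'(0)\neq 0$. The outline is correct and the potential difficulties you flag (equivariance of the reduction with the period parameter present, and the verification that the transversality condition survives as $\partial a/\partial\lambda\neq 0$) are exactly the points where the textbook proof does the real work. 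One small caution: your statement that the solutions have isotropy \emph{exactly} $\Sigma$ needs the observation that a $\CC$-axial $\Sigma$ is a maximal isotropy subgroup, so that generic points of $\fix(\Sigma)$ have isotropy $\Sigma$ and not more; this is immediate but worth saying.
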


 The group of all spatio-temporal symmetries of $x(t)$ is denoted
$\Sigma_{x(t)}\subseteq\Gamma\times\Ss^1$.
The symmetry group $\Sigma_{x(t)}$ can be identified with a pair of subgroups $H$ and $K$ of $\Gamma$ and a homomorphism $\Phi:H\rightarrow\Ss^1$ with kernel $K.$ We define
$$
H=\left\{\gamma\in\Gamma:\gamma \{x(t)\}=\{x(t)\}\right\}
\qquad
K=\left\{\gamma\in\Gamma:\gamma x(t)=x(t)~\forall t\right\}
$$
where $K\subseteq\Sigma_{x(t)}$ is the subgroup of spatial symmetries of $x(t)$ and the subgroup $H$ of $\Gamma$ consists of  symmetries preserving the trajectory  $x(t)$ but not necessarily the points in the trajectory. 
We abuse notation saying that $H$ is a group of spatio-temporal symmetries of $x(t)$. 
This makes sense because the groups $H\subseteq\Gamma$ and $\Sigma_{x(t)}\subseteq\Gamma\times\Ss^1$ are isomorphic; the isomorphism being the restriction to $\Sigma_{x(t)}$ of the projection of $\Gamma\times\Ss^1$ onto $\Gamma$.

Given an  isotropy subgroup $\Sigma\subset\Gamma$, denote by
$N(\Sigma)$  the normaliser of $\Sigma$ in $\Gamma$, satisfying $N(\Sigma)=\{\gamma\in\Gamma: \gamma \Sigma=\Sigma\gamma\}$, 
and by $L_\Sigma$  the  variety $L_\Sigma=\bigcup_{\gamma\notin \Sigma}\fix(\gamma)\cap\fix(\Sigma)$.

The second important tool in this article is the following result.

\begin{teorema}\label{teorema H mod K}
($H~\mathrm{mod}~K$ Theorem \cite{Buono,GS03}) Let $\Gamma$ be a finite group acting on $\RR^n.$ There is a periodic solution to some $\Gamma$-equivariant system of ODEs on $\RR^n$ with spatial symmetries $K$ and spatio-temporal symmetries $H$ if and only if the following conditions hold:
\begin{itemize}
\item [(a)] $H/K$ is cyclic;
\item [(b)] $K$ is an isotropy subgroup;
\item [(c)] $dim~Fix(K)\geqslant2.$ If $dim~Fix(K)=2,$ then either $H=K$ or $H=N(K)$;
\item [(d)] $H$ fixes a connected component of $\mathrm{Fix(K)\backslash L_K}$.
\end{itemize}
Moreover, if $(a)-(d)$ hold, the system can be chosen so that the periodic solution is stable.
\end{teorema}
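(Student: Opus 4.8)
The plan is to treat the two implications separately: necessity — that the symmetry groups of any periodic solution of a $\Gamma$-equivariant ODE satisfy (a)--(d) — is elementary, whereas sufficiency — realizing a prescribed pair $(H,K)$ by a periodic orbit, and moreover a \emph{stable} one — is where the work lies. For sufficiency I would proceed in two stages: first reduce the construction to the fixed-point subspace $\fix(K)$ by an equivariant extension argument, and then build an explicit equivariant vector field on $\fix(K)$ carrying an asymptotically stable periodic orbit with exactly the prescribed symmetry.

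For necessity, suppose $x(t)$ is a $T$-periodic solution whose spatial symmetry group is $K$ and whose spatio-temporal group is $H$. The map $\gamma\mapsto\theta(\gamma)$ determined by $\gamma x(t+\theta(\gamma))=x(t)$ is a well-defined homomorphism $\Phi\colon H\to\Ss^1$ with kernel $K$; since $\Gamma$ is finite, $H/K\cong\mathrm{Im}\,\Phi$ is a finite subgroup of $\Ss^1$ and hence cyclic, which is (a). For (b), if $\gamma x(t_0)=x(t_0)$ for some $t_0$, then $\gamma x(t)$ and $x(t)$ are solutions through the same point, so $\gamma x(t)\equiv x(t)$ by uniqueness and $\gamma\in K$; thus $K=\Gamma_{x(t_0)}$ is an isotropy subgroup. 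For (c), the orbit is a nonconstant closed curve contained in the linear subspace $\fix(K)$, and a one-dimensional linear subspace supports no nonconstant periodic solution, so $\dim\fix(K)\geqslant2$; when $\dim\fix(K)=2$ the orbit is a simple closed curve encircling the origin of $\fix(K)\cong\RR^2$, and a planar Jordan-curve argument combined with the disjointness of distinct flow orbits forces $H=K$ or $H=N(K)$. Finally, for (d), the orbit cannot meet $L_K$, since a point of $L_K$ has isotropy strictly larger than $K$ and would enlarge the spatial symmetry; hence the orbit lies in a single connected component of $\fix(K)\setminus L_K$, and $H$, preserving the orbit setwise, preserves that component.

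For sufficiency, assume (a)--(d), write $\bar\Gamma=N(K)/K$ acting (faithfully and orthogonally) on $W=\fix(K)$, and $\bar H=H/K$, which by (a) is cyclic, say $\bar H\cong\ZZ_q$. The reduction step is a standard equivariant extension lemma: a $\bar\Gamma$-equivariant vector field on $W$ extends to a $\Gamma$-equivariant vector field on $\RR^n$ for which $W$ is flow-invariant and normally attracting, so it is enough to carry out the construction inside $W$ and then transport the dynamics through the extension. Working in $W$, use (d) to pick a connected component $C$ of $W\setminus L_K$ fixed by $H$; then produce a simple closed curve $\mathcal{O}\subset C$ on which the generator of $\bar H$ acts as rotation by $2\pi/q$ (cyclically permuting $q$ marked phase points), with the remaining $\bar\Gamma$-translates of $\mathcal{O}$ pairwise disjoint and disjoint from $\mathcal{O}$ — possible precisely because $C$ meets no other isotropy stratum. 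Finally take a $\bar\Gamma$-invariant smooth function vanishing exactly on $\bar\Gamma\mathcal{O}$ and positive elsewhere, and let the vector field be minus its gradient plus a $\bar\Gamma$-equivariant term drifting tangentially around $\mathcal{O}$ at unit speed; this makes $\mathcal{O}$ an asymptotically stable periodic orbit whose setwise stabilizer in $\bar\Gamma$ is exactly $\bar H$ and whose pointwise stabilizer is trivial. Transporting through the extension, the resulting solution of the $\Gamma$-equivariant system has spatial symmetry $K$, spatio-temporal symmetry $H$, and is stable.

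The hard part is the sufficiency direction, and within it the simultaneous control of three things: that the orbit avoids every stratum of $L_K$, so that no extra spatial symmetry is created; that its setwise stabilizer is \emph{exactly} $\bar H$ and not larger, which is where the cyclic structure in (a) is used to realize $\bar H$ as honest phase shifts; and that the equivariant extension to $\RR^n$ neither introduces new symmetries along the orbit nor destroys its asymptotic stability. Making the extension lemma precise — producing a global $\Gamma$-equivariant vector field from the local $\bar\Gamma$-equivariant data while preserving normal hyperbolicity — is the technical heart of the argument, and conditions (c) and (d) are exactly the constraints that make it feasible.
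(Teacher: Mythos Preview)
The paper does not contain a proof of this theorem: it is stated as a cited result from \cite{Buono,GS03} and used as a black-box tool in Section~\ref{sectionSpatioTemporal}. There is therefore no ``paper's own proof'' to compare your attempt against.

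That said, your outline is a faithful sketch of the argument in the original references. The necessity direction is essentially as you describe it; the only place where your treatment is a bit thin is the case $\dim\fix(K)=2$ in (c), where the dichotomy $H=K$ or $H=N(K)$ requires showing that any spatio-temporal symmetry acting nontrivially on the planar orbit must generate the full action of $N(K)/K$ on $\fix(K)$ --- this uses that $N(K)/K$ acts freely on the circle of directions in $\fix(K)$, so its image in $\mathbf{O}(2)$ is cyclic or dihedral and the orbit-preserving elements form either the trivial subgroup or all of it. For sufficiency, your two-stage plan (reduce to $\bar\Gamma=N(K)/K$ acting on $W=\fix(K)$, then build an explicit attracting limit cycle with prescribed $\bar H$-symmetry and extend equivariantly) is exactly the strategy of Buono--Golubitsky; the delicate point you correctly flag --- arranging that the setwise stabilizer is \emph{exactly} $\bar H$ and no larger --- is handled in the references by choosing the curve $\mathcal{O}$ generically within the $H$-fixed component $C$, so that no element of $\bar\Gamma\setminus\bar H$ maps $\mathcal{O}$ to itself.
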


 When $H/K\sim\ZZ_m$, the periodic solution $x(t)$ is called either a standing wave or (usually for $m\geqslant3$) a discrete rotating wave; and when $H/K\sim \Ss^1$ it is called a rotating wave  \cite[page 64]{GS03}. Here all rotating waves are discrete.

\section{Group actions}\label{section 1}
Our aim in this article is to compare the bifurcation of periodic solutions for generic differential equations equivariant under   different representations of the same  abstract group. 
In this section we describe the  representations used.

\subsection{Symmetries of the  tetrahedron}
The group $\TT $ of rotational symmetries of the tetrahedron \cite{barany}  has
order $12$. Its action on $\RR^3$ is generated by two rotations $R$ and $C$ of orders 2 and 3, respectively, and given by
\begin{equation*}\label{generators1}
R=
\begin{bmatrix}
1&0&0\\
0&-1&0\\
0&0&-1
\end{bmatrix}
\hspace{0.3cm}C=
\begin{bmatrix}
0&1&0\\
0&0&1\\
1&0&0
\end{bmatrix}.
\end{equation*}

Next we want to augment the group $\TT $ with a reflection, given by
\begin{equation*}\label{generator}
\kappa=
\begin{bmatrix}
1&0&0\\
0&0&1\\
0&1&0
\end{bmatrix}
\end{equation*}
to form $\tetra$, the full group of symmetries of the thetrahedron,
that  has order $24$.
The matrix group $\tetra$ corresponds to the irreducible complex representation $\rho_1$ of $S_4$ given by permutations of the indices of the  vertices $V_1=(1,1,1)$, $V_2=(1,-1,-1)$, $V_3=(-1,-1,1)$, $V_4=(-1,1,-1)$ of a tetrahedron,
and is given by: 
$$
\rho_1\left((234)\right)= C\qquad\qquad
\rho_1\left((12)(34)\right)= R\qquad\qquad
\rho_1\left((34)\right)= \kappa \ .$$

We obtain an action of $\tetra$ on $\RR^6$ by identifying $\RR^6\equiv\CC ^3$ and taking the same matrices as generators,
with 1 and 0 standing for  $2\times 2$ identity and zero blocks, respectively. Decomposing $\RR^6\equiv\CC ^3$ into the subspaces of real and imaginary parts shows that the representation of $\tetra$ on $\RR^3$ is
$\tetra$-simple.
The isotropy lattice of $\tetra$ is shown in Figure~\ref{FigLatticeTetra2}.

\begin{figure}[ht]
\begin{center}
\includegraphics[scale=0.7]{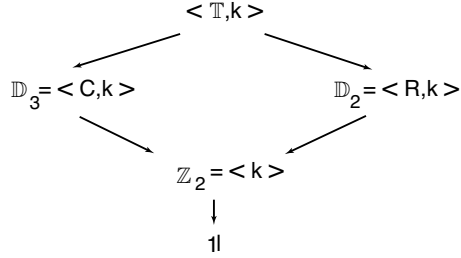} 
\end{center}
\caption{Isotropy lattice for the group  $\tetra $ of symmetries of the tetrahedron.}\label{FigLatticeTetra2}
\end{figure}

\subsection{Symmetries of the cube}
The  action on $\RR^3$  of the group $\OO$ of rotational symmetries of the cube is generated by the rotation  $C$ of order 3,  with the matrix above, and by the rotation $T$ of order 4
$$
T=
\begin{bmatrix}
0&1&0\\
-1&0&0\\
0&0&1
\end{bmatrix} .
$$
As in the case of the symmetries of the tetrahedron, we obtain an action of $\OO$ on $\RR^6$ by identifying $\RR^6\equiv\CC ^3$ and using the same matrices,  with the conventions above.

 Geometrically, $\OO$ can be seen as  permutations of the indices $j=1,\ldots,4$ of opposite pairs $\pm P_j$ of the vertices  $P_1=(1,1,1)$, $P_2=TP_1$, $P_4=TP_2$, $P_3=TP_4$ and $P_{4+j}=-P_j$, $j=1,\ldots,4$ of a cube. This corresponds to 
 a complex representation $\rho_2$ of $S_4$ given by
$$
\rho_2\left((234)\right)=C\qquad\qquad
\rho_2\left((1243)\right)=T \ .
$$
This means that as abstract groups,  $\tetra$ and $\OO$ are isomorphic, the isomorphism maps $C$ into itself and the rotation $T$ of order 4 in $\OO$ into the rotation-reflection $C^2R\kappa$ in $\tetra$. 
However,
 the two representations of $S_4$  are not equivalent, since $\OO$ is a subgroup of   $SO(6)$
  and $\tetra$ is not. The isotropy lattice of  $\OO$ is shown in Figure~\ref{FigLatticeOcta}.

\begin{figure}[ht]
\begin{center}
\includegraphics[scale=0.65]{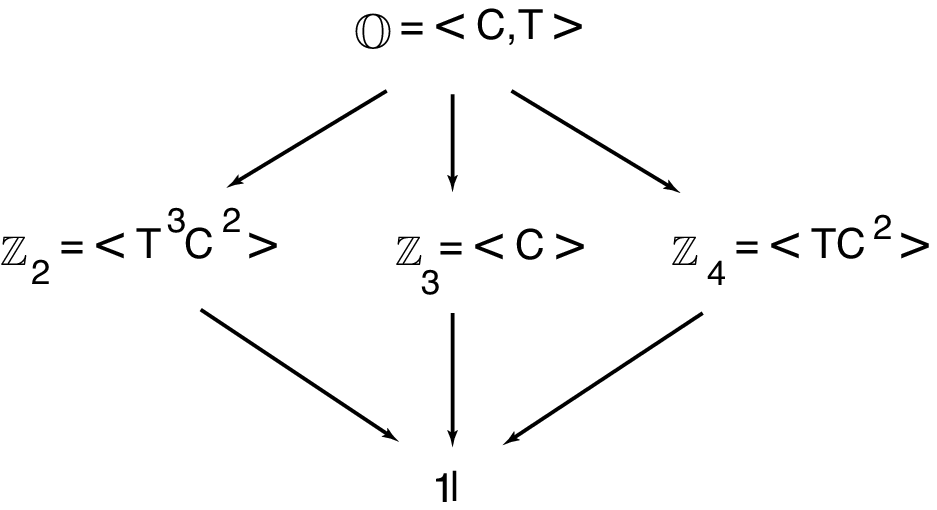}\ \includegraphics[scale=0.65]{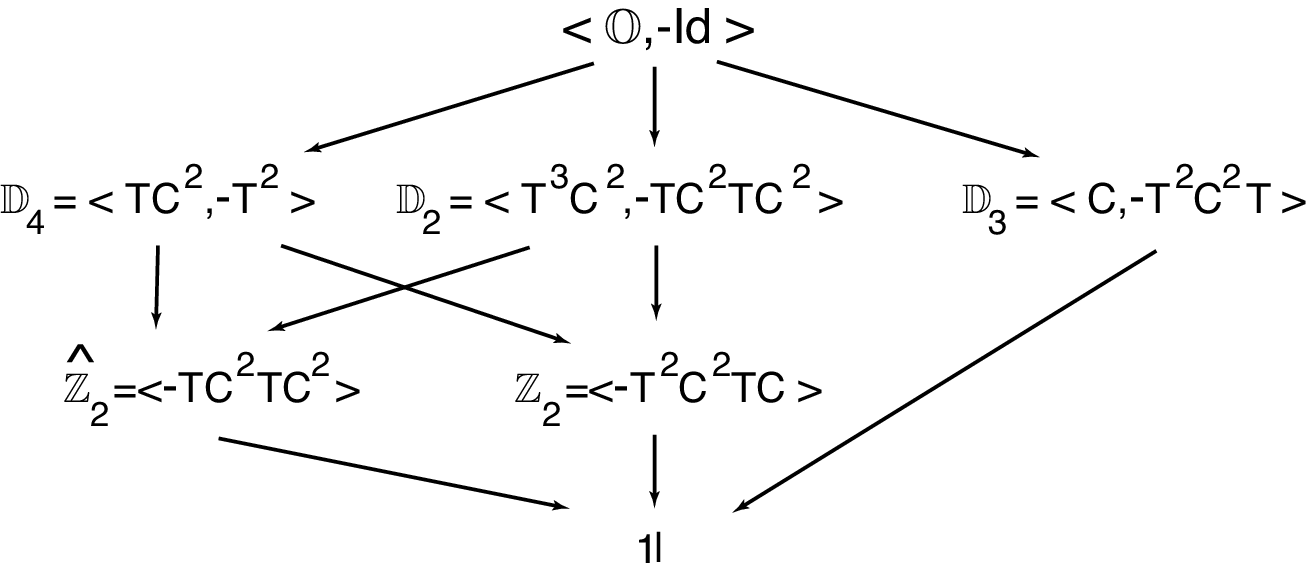} \end{center}
\caption{Isotropy lattices for the group  $\OO$ of rotational symmetries of the cube (left) and for the full group of symmetries of the cube $\cubo$ (right).}\label{FigLatticeOcta}
\end{figure}

 Adding the rotation-reflection $-Id$ to $\OO$ yields the full symmetry group of the cube, that will be denoted $\cubo$, the image of the extension of $\rho_2$ to a representation $\rho_3$ of $S_4\times\ZZ_2$. The group $\cubo$
 has order 48 and its isotropy lattice is shown in  Figure~\ref{FigLatticeOcta}.

\subsection{Adding $\Ss^1$}
The corresponding actions of $\Gamma\times\Ss^1$ on $\CC ^3$, where $\Gamma$ is one of the groups $\tetra$, $\OO$ and $\cubo$, are obtained by adding the elements $e^{i\theta}\cdot\mathrm{Id},~\theta\in(0,2\pi)$
to the group $\Gamma$.
Note that with this action the elements of $\Ss^1$ commute with those of $\Gamma$, hence this extends the representations $\rho_j$ above to  representations $\tilde{\rho}_1$ and $\tilde{\rho}_2$ of $S_4\times \Ss^1$ and $\tilde{\rho}_3$ of $S_4\times\ZZ_2\times \Ss^1$.

 Since $e^{i\pi}Id=-Id\in\cubo$, 
 the groups $\cubo\times\Ss^1$ and $\OO\times\Ss^1$ coincide as subgroups of ${\mathbf {O}}(6)$. 
The product  $\tetra\times\Ss^1$ also yields the same subgroup of ${\mathbf {O}}(6)$,
 because $T=e^{i\pi}C^2R\kappa\in \tetra\times\Ss^1$ and, on the other hand,
 $R= TC^2TC^2\in  \OO$ and  $\kappa=e^{i\pi} T^2C^2TC\in \OO\times\Ss^1$.
We will denote this  matrix  group by $\Gamma\times\Ss^1$, its  isotropy lattice  is shown in 
Figure~\ref{FigLatticeGammaS1}.

Since the two representations $\rho_1$ and $\rho_2$ are not equivalent, their characters $\chi_1$ and $\chi_2$
are orthogonal maps in the space $L^2\left(S_4,\CC\right)$.
The characters $\tilde{\chi}_j$ of the extended representations $\tilde{\rho}_j$ are given by 
$\tilde{\chi}_j\left((\gamma,\theta)\right)=e^{i\theta}\chi_j(\gamma)$, and hence, $\tilde{\chi}_1$ and  $\tilde{\chi}_2$ are also orthogonal  in the space $L^2\left(S_4\times\Ss^1,\CC\right)$.
It follows that the two representations $\tilde{\rho}_1$ and $\tilde{\rho}_2$ are not equivalent, even though their images in $O(6)$ coincide as matrix groups.

Note that in the case $\Gamma=\cubo$  the element $(-Id,\pi)$  acts trivialy as the identitiy.
The representation $\tilde{\rho}_3$ is not faithful, its kernel  is generated by $(Id,-Id,\pi)\in S_4\times\ZZ_2\times \Ss^1$.

\begin{figure}[ht]
\begin{center}
 \includegraphics[scale=0.8]{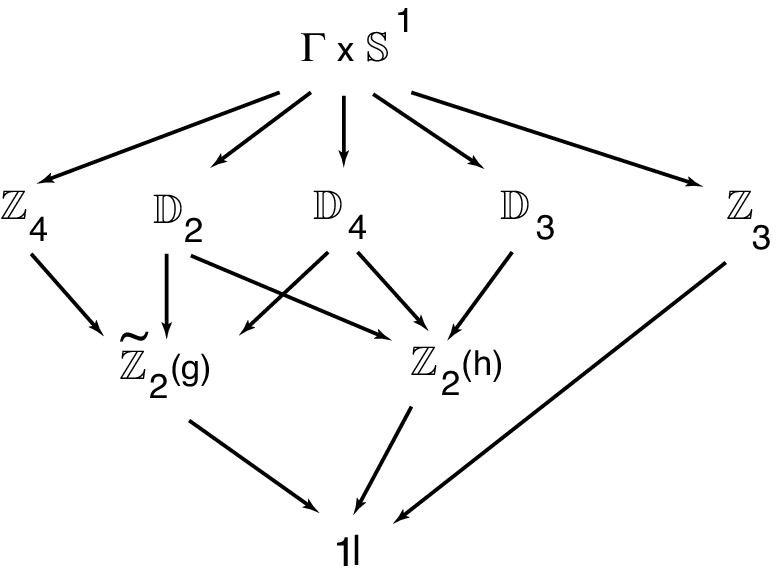}
\end{center}
\caption{Isotropy lattice for the group  $\Gamma\times\Ss^1$, where $\Gamma$ is any of the three groups $\tetra$, $\OO$ or $\cubo$. The labels (g) and (h) on the two subgroups isomorphic to $\ZZ_2$ correspond to rows in Tables~\ref{table isotropy subgroups tetrahedral+k} and \ref{table isotropy subgroups} below.}\label{FigLatticeGammaS1}
\end{figure}

\section{Hopf bifurcation}\label{secHopf}
The first step in studying $\Gamma$-equivariant Hopf bifurcation is to obtain the $\CC$-axial subgroups of $\Gamma\times\Ss^1$.
 The fixed-point subspaces for the isotropy subgroups are listed in Table~\ref{table isotropy subgroups tetrahedral+k}. Generators of the isotropy subgroups are given in Table~\ref{table isotropy subgroups}, in terms of the generators of $\tetra$, of $\OO$ and of $\cubo$.
\begin{table}[h]
\caption{Solution types, isotropy subgroups and  fixed-point subspaces for the action of $\Gamma\times\Ss^1$  on $\CC^3$ where $\Gamma$ is either $\tetra$ or $\OO$ or $\cubo$. Here $\omega$ stands for $e^{2\pi i/3}$.
The dividing line separates $\CC$-axial subgroups.
}
\label{table isotropy subgroups tetrahedral+k}
\begin{tabular}{|c|l|c|c|c|}
\hline
Index&Name&Isotropy&Fixed-point &dim\\
&&subgroup&subspace&\\
\hline
$(a)$&Origin &$\Gamma\times\Ss^1$& $\{(0,0,0)\}$&$0$\\
$(b)$&Pure mode &$\mathbb{D}_4$&$\{(z,0,0)\}$&$2$\\
$(c)$& Standing wave& $\mathbb{D}_3$&$\{(z,z,z)\}$&$2$\\
$(d)$& Rotating wave& $\ZZ_3$ &$\{(z,\omega z,\omega^2z)\}$&$2$\\
$(e)$& Standing wave & $\mathbb{D}_2$&$\{(0,z,z)\}$&$2$\\
$(f)$ &Rotating wave&$  \ZZ_4$&$\{(z,iz,0)\}$&$2$\\
\hline
$(g)$ &2-Sphere solutions&$\widetilde{\ZZ}_2$& $\{(0,z_1,z_2)\}$&$4$\\
$(h)$ &2-Sphere~solutions&$\ZZ_2$&$\{(z_1,z_2,z_2)\}$&$4$\\
$(i)$ &General~solutions&$\one$&$\{(z_1,z_2,z_3)\}$&$6$\\
\hline
\end{tabular}
\end{table}

\begin{table}[h]
\caption{
Generators of isotropy subgroups for the action of $\Gamma\times\Ss^1$  on $\CC^3$ for $\Gamma=\tetra$, $\Gamma=\OO$ and $\Gamma=\cubo$, with $\omega=e^{2\pi i/3}$.
The index (a) to (i) refers to the rows in Table~\ref{table isotropy subgroups tetrahedral+k}, and the dividing line separates $\CC$-axial subgroups.
}
\label{table isotropy subgroups}
\begin{tabular}{|c|c|c|c|c|}
\hline
Index&Isotropy&Generators&Generators&Generators\\
&subgroup& in $\langle\TT,\kappa\rangle\times\Ss^1$&in $ \mathbb{O}\times\mathbb{S}^1$&in $\cubo\times\mathbb{S}^1$\\
\hline
$(a)$  &$\Gamma\times\Ss^1$
& $\{C,R,\kappa,e^{i\theta}\}$&$\{C,T,e^{i\theta}\}$&$\{C,T,e^{i\theta}\}$ \\
$(b)$  &$\mathbb{D}_4$&$\{e^{\pi i}  C^2RC,\kappa\}$ &$\{TC^2,e^{\pi i}T^2\}$&$\{TC^2,-T^2\}$ \\
$(c)$ &$\mathbb{D}_3$&$\{C,\kappa\}$&$\{C,e^{\pi i}T^2C^2T\}$ &$\{C,-T^2C^2T\}$  \\
$(d)$ &$\ZZ_3$&$\{\bar{\omega} C\}$&$\{\bar{\omega}C\}$ &$\{\bar{\omega}C\}$\\
$(e)$&$\mathbb{D}_2$&$\left\{e^{\pi i}  R,\kappa \right\}$&$\{e^{\pi i}TC^2TC^2,T^3C^2\}$&$\{-TC^2TC^2,T^3C^2\}$\\
$(f)$ &$  \ZZ_4$&$ \left\{    e^{\pi i/2}  C^2\mathrm{R}\kappa\right\}$&$\{e^{-\pi i/2}T\}$&$\{e^{-\pi i/2}T\}$\\
\hline
$(g)$ &$\widetilde{\ZZ}_2$&$\{e^{\pi i}  R\}$&$\{e^{\pi i}TC^2TC^2\}$&$\{-TC^2TC^2\}$\\
$(h)$ &$\ZZ_2$&$\{\kappa\}$&$\{e^{\pi i}T^2C^2TC\}$&$\{-T^2C^2TC\}$\\
$(i)$ &$\one$&$\{Id\}$&$\{Id\}$&$\{Id\}$\\
\hline
\end{tabular}
\end{table}

The normal form for a  $\Gamma\times\Ss^1$-equivariant vector field truncated to the cubic order is
\begin{equation}\label{normal form}
\left\{
\begin{array}{l}
\dot{z}_1=z_1\left(\lambda+\gamma\left(|z_1|^2+|z_2|^2+|z_3|^2\right)+\alpha\left(|z_2|^2+|z_3|^2\right)\right)
+\bar{z}_1\beta\left(z_2^2+z_3^2\right)\\
\\
\dot{z}_2=z_2\left(\lambda+\gamma\left(|z_1|^2+|z_2|^2+|z_3|^2\right)+\alpha\left(|z_1|^2+|z_3|^2\right)\right)
+\bar{z}_2\beta\left( z_1^2+ z_3^2\right)\\
\\
\dot{z}_3=z_3\left(\lambda+\gamma\left(|z_1|^2+|z_2|^2+|z_3|^2\right)+\alpha\left(|z_1|^2+|z_2|^2\right)\right)
+\bar{z}_3\beta \left(z_1^2+z_2^2\right)
\end{array}
\right.
\end{equation}
where $\alpha,\beta,\gamma,\lambda$ are all complex coefficients. 
This normal form is the same used in \cite{barany} for the $\TT\times\Ss^1$ action, except that  the extra symmetry $\kappa$ forces some of the coefficients to be equal. The normal form \eqref{normal form} is slightly different, but equivalent, to the one given in \cite{Ashwin}
 for the group $\OO\times\Ss^1$.
As in \cite{Ashwin,barany}, the origin is always an equilibrium of  \eqref{normal form} and it undergoes a Hopf bifurcation when $\lambda$ crosses the imaginary axis.
By the Equivariant Hopf Theorem, this generates several branches of periodic solutions, corresponding to the   $\CC$-axial subgroups of  $\Gamma\times\Ss^1$.
 
Under additional conditions on the parameters in \eqref{normal form} there may  be other periodic solution branches arising through Hopf bifurcation outside the fixed-point subspaces for $\CC$-axial subgroups.
These have been analysed in \cite{Ashwin}, we proceed to describe them briefly, with some additional information from \cite{MurzaThesis}.

\subsection{Submaximal branches in $\{(z_1,z_2,0)\}$}\label{FixPiR}
As a fixed-point subspace for the $\tetra\times\Ss^1$ action, this subspace  is 
 $\fix\left(\ZZ_2\left( e^{\pi i}CRC^2\right)\right)$  conjugate to 
$\fix\left(\ZZ_2\left( e^{\pi i}R\right)\right)$,  (see Tables~\ref{table isotropy subgroups tetrahedral+k} and \ref{table isotropy subgroups}). 
The same subspace  is described as $\fix\left(\ZZ_2\left(e^{\pi i}T^2 \right)\right)$, under either the 
$\OO\times\Ss^1$ or the $\cubo\times\Ss^1$ actions,
and is  conjugate to  the subspaces $\fix\left(\ZZ_2\left(e^{\pi i}TC^2TC^2 \right)\right)$,
 and $\fix\left(\ZZ_2\left(-TC^2TC^2 \right)\right)$, that appear in Table~\ref{table isotropy subgroups}. 
It contains the fixed-point subspaces $\{(z,0,0)\}$, $\{(z,z,0)\}$, $\{(iz,z,0)\}$, corresponding to $\CC$-axial subgroups  of $\Gamma\times\Ss^1$, as well as a conjugate copy of each one of them.

Solution branches of \eqref{normal form}  in the fixed-point subspace $\{(z_1,z_2,0)\}$
 have been analysed in  \cite{MurzaThesis,Ashwin,Swift}.
   The analysis in \cite{Swift} is quite extensive, and is much more complete than the analysis in either \cite{Ashwin} or the present article.
 We give an outline of their findings here, so results can be used in Section~\ref{sectionSpatioTemporal}. 
 
Restricting the  normal form \eqref{normal form} to  the subspace $\{(z_1,z_2,0)\}$ and eliminating solutions that lie in the two-dimensional fixed-point subspaces, one finds that   for $\beta\ne 0$ there is a solution branch with no additional symmetries, if and only if both $\displaystyle\left|\alpha/\beta\right|> 1$ and $\displaystyle-1<\Real\left(\alpha/\beta\right)<1$ hold.
These solutions have the form $\{(\xi z,z,0)\}$, with $\xi=re^{i\phi}$, where
\begin{equation}\label{pitchfork condition}
 \cos(2\phi)=-\Real\left(\alpha/\beta\right)\quad
 \sin(2\phi)=\pm\sqrt{1-\left( \Real\left(\alpha/\beta\right) \right)^2}
 \quad
 r^2=\frac{\displaystyle \Imag\left(\alpha/\beta\right)+\sin(2\phi)}{\displaystyle \Imag\left(\alpha/\beta\right)-\sin(2\phi)}.
 \end{equation}
 
 The submaximal branch of periodic solutions connects all the maximal branches that lie in the subspace 
 $\{(z_1,z_2,0)\}$. This can be deduced from the expressions \eqref{pitchfork condition}, as we proceed to explain, and as illustrated in Figure~\ref{figuraPitchfork}.
 
\begin{figure}[ht]
\begin{center}
\includegraphics[scale=0.350]{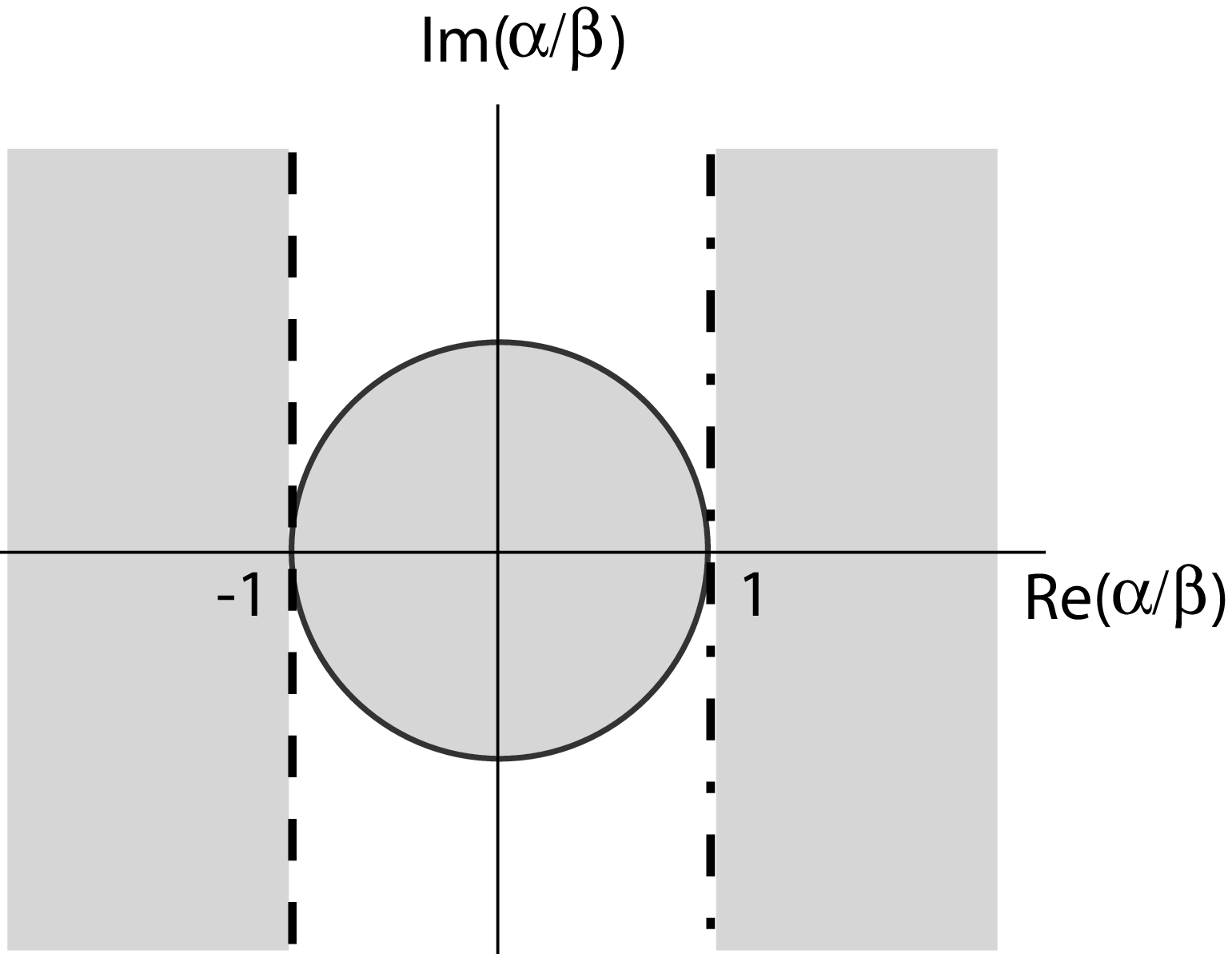}\quad
\includegraphics[scale=0.350]{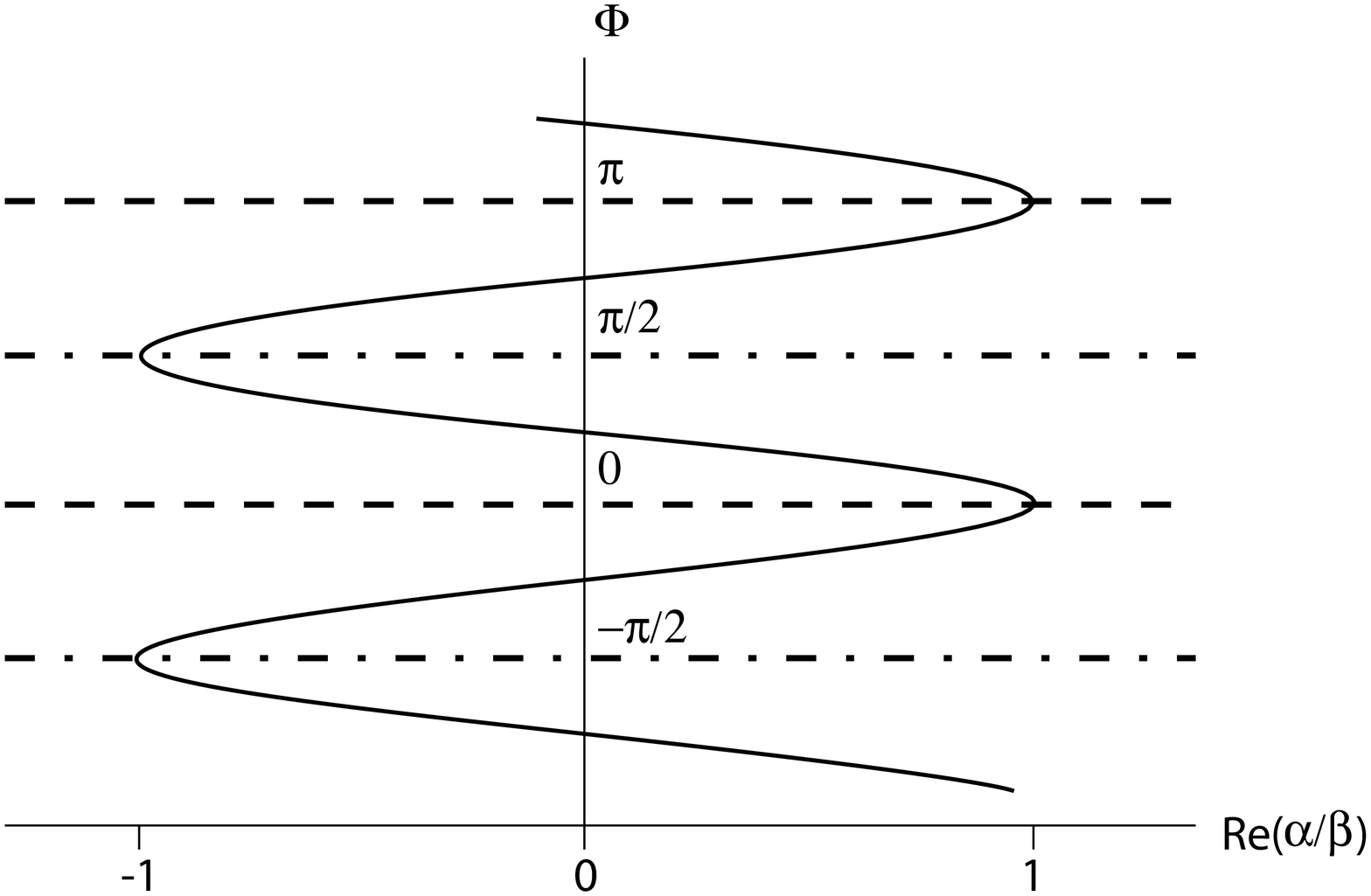}
\caption{Submaximal solutions in the subspace $\{(z_1,z_2,0)\}$ and their connection to the maximal branches. 
Left: submaximal branches exist in the white area of the $\alpha/\beta$-plane. 
Bifurcation into the branches that lie in $\{(0,z,0)\}$ and $\{(z,0,0)\}$, 
occurs at the circle $\left| \alpha/\beta\right|=1$, for  any value of   the phase-shift $\phi$.
Right: 
dashed lines stand for the branches in $\{(\pm z,z,0)\}$ and dot-dash for the branches in $\{(\pm i z,z,0)\}$. Pairs of submaximal solutions bifurcate from  these branches at pitchforks at specific values of  the phase-shift $\phi$ when $\Real\left(\alpha/\beta\right)=\pm 1$.}\label{figuraPitchfork}
\end{center}
\end{figure} 
When $\Real\left(\alpha/\beta\right)=+1$ we get $\xi=\pm i$ and the submaximal  branches lie in the subspaces  $\{(\pm iz,z,0)\}$. 
The submaximal branches only exist for $\Real\left(\alpha/\beta\right)\le 1$
and, when $\Real\left(\alpha/\beta\right)$ increases to $+1$, pairs of submaximal branches coalesce into the subspaces $\{(\pm iz,z,0)\}$, as in Figure~\ref{figuraPitchfork}.

Similarly, for $\Real\left(\alpha/\beta\right)=-1$ we have $\xi=\pm 1$. 
As $\Real\left(\alpha/\beta\right)$ decreases to $-1$, the submaximal branches coalesce into the  subspace$\{(\pm z,z,0)\}$,  at a pitchfork.

To see what happens at $\left|\alpha/\beta\right|= 1$, we start  with $\left|\alpha/\beta\right|> 1$.
The submaximal branch exists when  
$\left|\Real\left(\alpha/\beta\right)\right|<1$ and hence  $\left| \Imag\left(\alpha/\beta\right)\right|>1$.
As $\left|\alpha/\beta\right|$ decreases, when we reach the value $1$
  we have 
$
\left|\alpha/\beta\right|^2=\left(\Real\left(\alpha/\beta\right)\right)^2+\left(\Imag\left(\alpha/\beta\right)\right)^2= 1
$
hence $\sin(2\phi)$ tends to $\pm\Imag\left(\alpha/\beta\right)$.
The expression for $r^2$ in \eqref{pitchfork condition} shows that in this case either $r$ tends to 0 or $r$ tends to $\infty$ and  the submaximal branches bifurcate from the conjugate subspaces   $\{(0,z,0)\}$ and $\{(z,0,0)\}$.

From the expression for $r^2$ in \eqref{pitchfork condition} it follows that the condition  $r=1$ at the submaximal branch only occurs when $\sin(2\phi)=0$ 
 and the conditions $r=0$, $r=\infty$ occur when $\left|\alpha/\beta\right|= 1$.
These conditions are satisfied at the values of $\alpha$ and $\beta$ for which the submaximal branch bifurcates from one of the $\CC$-axial subspaces, as we have already seen.
This implies that the submaximal solution branches  have no additional symmetry, because, for  any 
$\gamma$  in $\Gamma\times\Ss^1$,
 the norm of the first coordinate of $\gamma\cdot(\xi z,z,0)$ is either $|z|$, or $r|z|$, or zero. 
If  $r$ is neither $0$, $1$ or $\infty$, then the only possible symmetries are those that fix the subspace $\{(z_1,z_2,0)\}$.

\subsection{Submaximal branches in $\{(z_1,z_1,z_2)\}$}\label{secFixZ2kTil}
As a fixed-point subspace for the $\tetra\times\Ss^1$ action, this subspace  is  conjugate to 
$\fix\left(\ZZ_2\left( \kappa\right)\right)$.
 Under the $\OO\times\Ss^1$ action,
 it is  conjugate to $\fix\left(\ZZ_2\left(e^{\pi i}T^2C^2TC \right)\right)$
 and, under the $\cubo\times\Ss^1$ action,  
to $\fix\left(\ZZ_2\left(-T^2C^2TC \right)\right)$
(see Tables~\ref{table isotropy subgroups tetrahedral+k} and \ref{table isotropy subgroups}),  the conjugacy 
  in all cases being realised by  $C^2$. 
It contains the fixed-point subspaces 
 $\{(z,z,0)\}$, $\{(0,0,z)\}$, $\{(z,z,z)\}$,
corresponding to $\CC$-axial subgroups, as well as a conjugate copy of each one of them.
In what follows, we give a geometric  construction for finding solution branches of \eqref{normal form}  in the fixed-point subspace $\{(z_1,z_1,z_2)\}$, completing the description given in \cite{Ashwin}. 
In particular, we show that the existence of these branches only depends on the
 value of $\alpha/\beta$
 in the normal form, and that there are parameter values for which three submaximal solution branches coexist.

Restricting the  normal form \eqref{normal form} to this subspace and eliminating some solutions that lie in the two-dimensional fixed-point subspaces, one finds that   for $\beta\ne 0$ there is a solution branch through the point $(z,z,\xi z)$ with $\xi=re^{-i \psi}$, if and only if
\begin{equation}\label{firstReduction}
\beta-\alpha+r^2\alpha+\beta\left(r^2e^{-2i\psi}-2e^{2i\psi}\right)=0 
 \qquad\mbox{hence}\qquad
r^2=\frac{\alpha-\beta+2\beta e^{2i \psi}}{\alpha+\beta e^{-2i \psi} .
}
\end{equation}
 The expression for $r^2$ must be real and positive.
For $(x,y)=(\cos 2\psi,\sin 2\psi)$, this is equivalent to the conditions 
  $\rr(x,y)>0$ and $\ri(x,y)=0$ where:
$$
\rr(x,y)=\left(x-x_0\right)^2-\left(y-y_0\right)^2+K_r>0
\qquad
K_r=\left[\left(3\ \Imag\left(\alpha/\beta \right) \right)^2-
 \left(\Real\left(\alpha/\beta \right) +1\right)^2
\right]/16
$$
and
$$
\ri(x,y)=\left(x-x_0\right)\left(y-y_0\right)+K_i=0\qquad
K_i=3\ \Imag\left(\alpha/\beta \right)
\left[ \Real\left(\alpha/\beta \right)+1\right]/16
$$
 with
$$
x_0=\left(1-3 \Real\left(\alpha/\beta \right) \right)/4
\qquad
y_0=\Imag\left(\alpha/\beta \right)/4 
\quad\mbox{and}\quad
 r^2=2\rr(x,y)/\left|e^{-2i\psi} +\alpha/\beta\right|^2  .
$$
 Therefore, solution branches will correspond to points $(x,y)$ where the (possibly degenerate) hyperbola 
$\ri(x,y)=0$ intersects the unit circle, subject to the condition $\rr(x,y)>0$.
There may be up to four intersection points.
By inspection we find that $(x,y)=(1,0)$ is always an intersection point, where either $\psi=0$ or $\psi=\pi$.
Substituting  into \eqref{firstReduction}, it follows that $r=1$.
 Moreover, $\rr(1,0)=\left[(\Real(\alpha/\beta)+1)^2+(\Imag(\alpha/\beta))^2\right]/2\ge 0$.
These solutions correspond to the two-dimensional fixed point subspace $\{(z,z,z)\}$ and its conjugate $\{(z,z,-z)\}$.

Solutions that lie in the  fixed-point subspace $\{(z,z,0)\}$ correspond to $r=0$ and  hence to $\rr(x,y)=0$.
The subspace $\{(0,0,z)\}$ corresponds to $r\to\infty$ in \eqref{firstReduction}, i.e. to 
 $e^{-2i\psi}=-\alpha/\beta$. At these points both $\rr(x,y)$ and $\ri(x,y)$ are zero.

Generically, the fact that  $(1,0)$ is an intersection point implies that $\ri(x,y)=0$ meets  the circle on at least one more point. 
 The discussion above shows that all the other intersections that satisfy $\rr(x,y)>0$ correspond to submaximal branches.
We proceed to describe some of the situations that may arise.

\begin{figure}[ht]
\begin{center}
a) \includegraphics[scale=0.35]{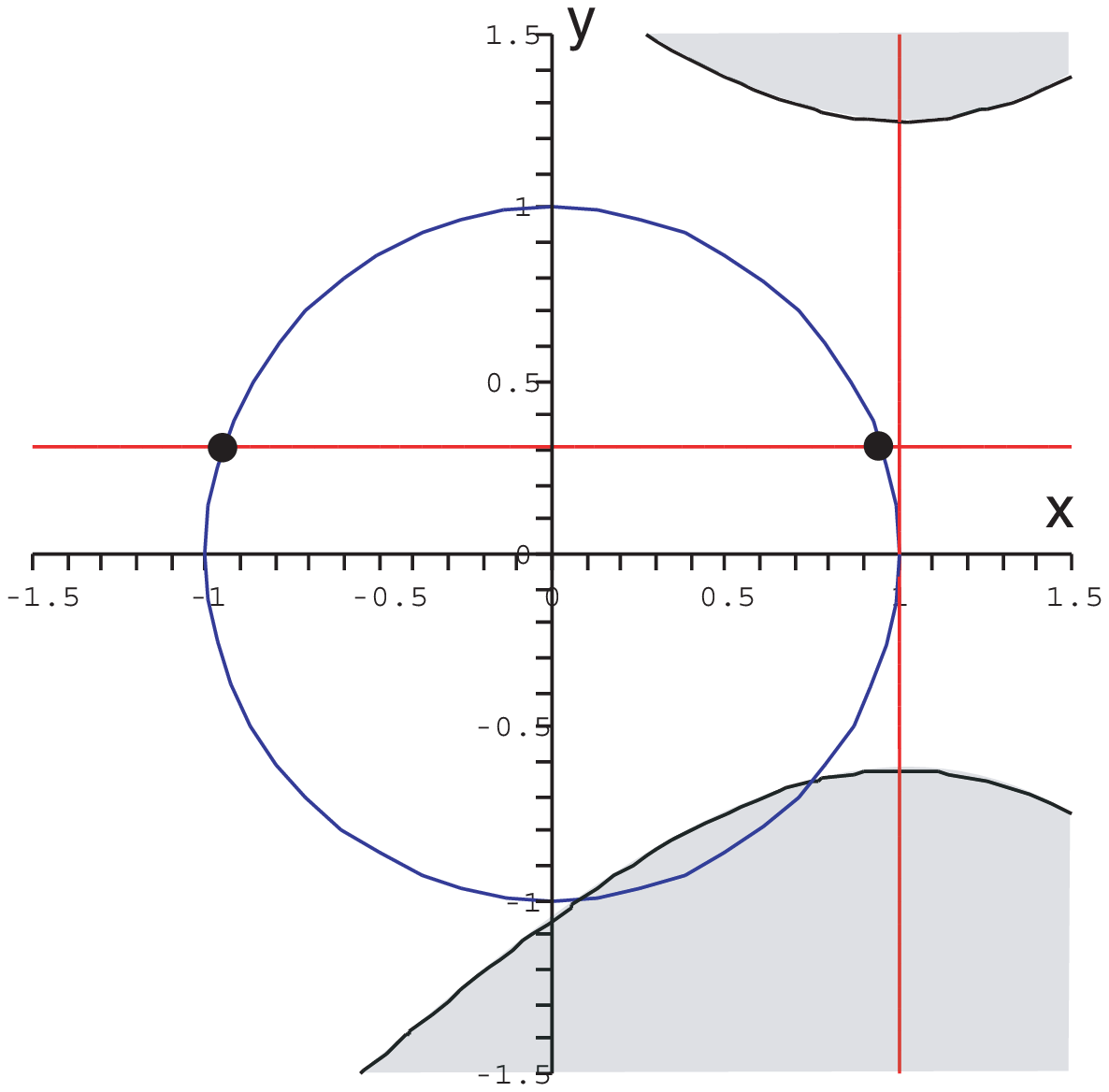}
 \qquad
b)  \includegraphics[scale=0.35]{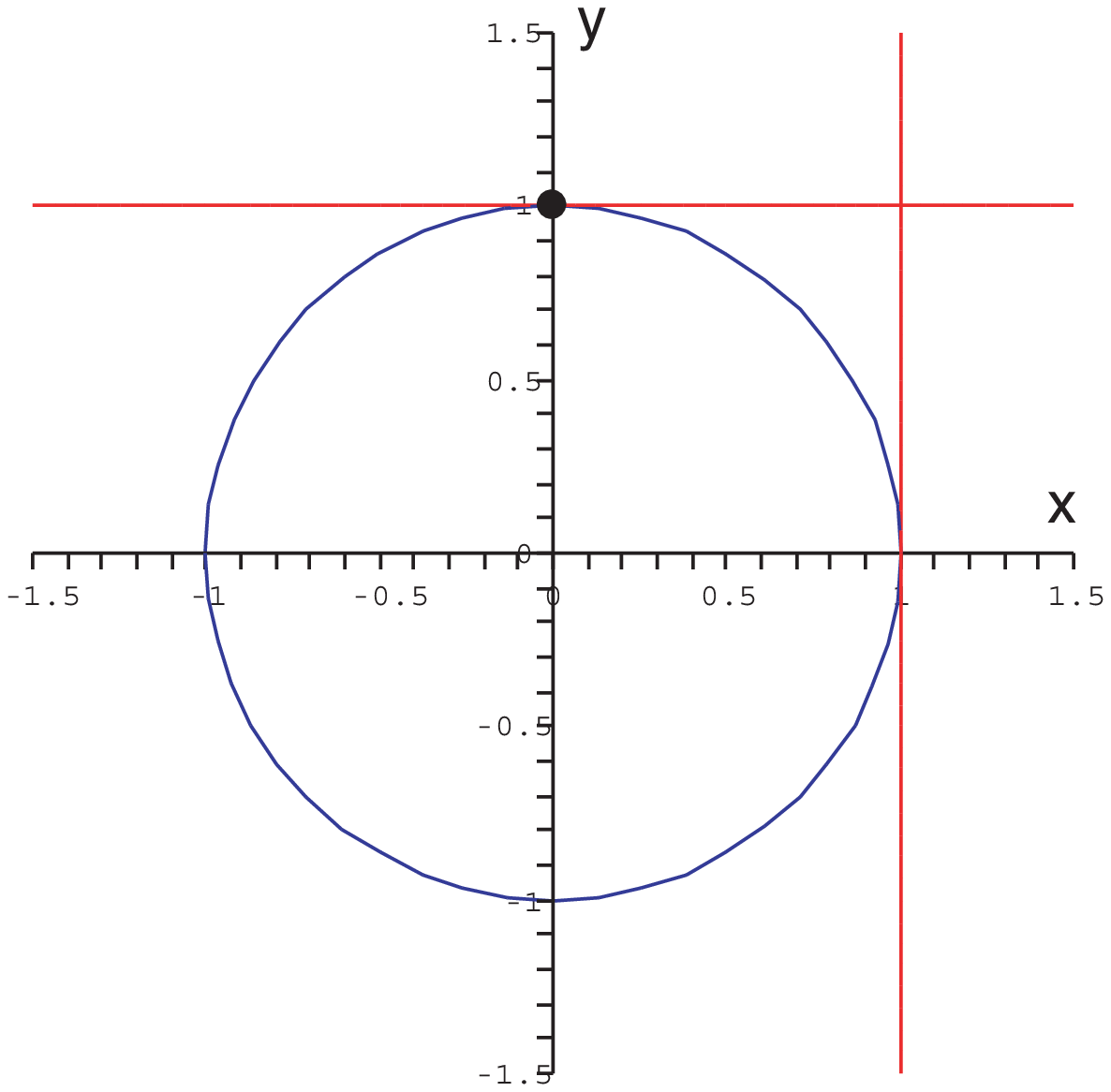}
\end{center}
\caption{
Graphs  of $\ri(x,y)=0$   in the $(x,y)$-plane, a degenerate hyperbola, consisting of two red lines, $x=1$ and $y=y_0$.
Submaximal solutions (black dots) arise where the degenerate hyperbola intersects the  unit circle (blue) with $\rr(x,y)>0$ (outside grey area).
The intersection at $(x,y)=(1,0)$ corresponds to a solution with maximal isotropy subgroup,  other intersections represent submaximal branches.
For $\alpha/\beta=-1+5i/4$ there are two submaximal solutions, shown in a). The solutions come together  when $\alpha/\beta=-1+4i$, shown in b).
}\label{figHypLines1}
\end{figure} 

\begin{figure}[ht]
\begin{center}
a)
 \includegraphics[scale=0.35]{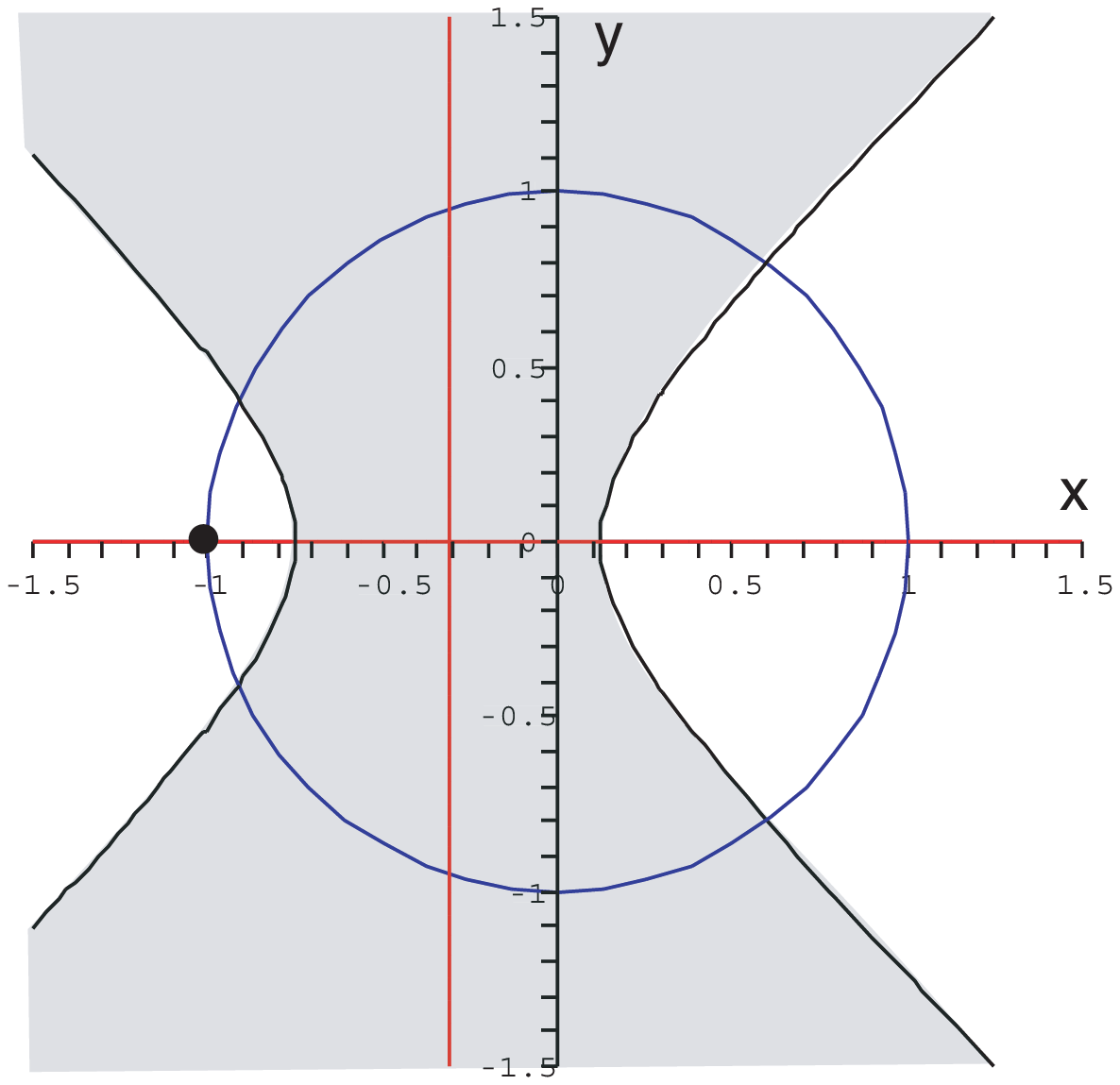}
 \qquad
 b)
  \includegraphics[scale=0.35]{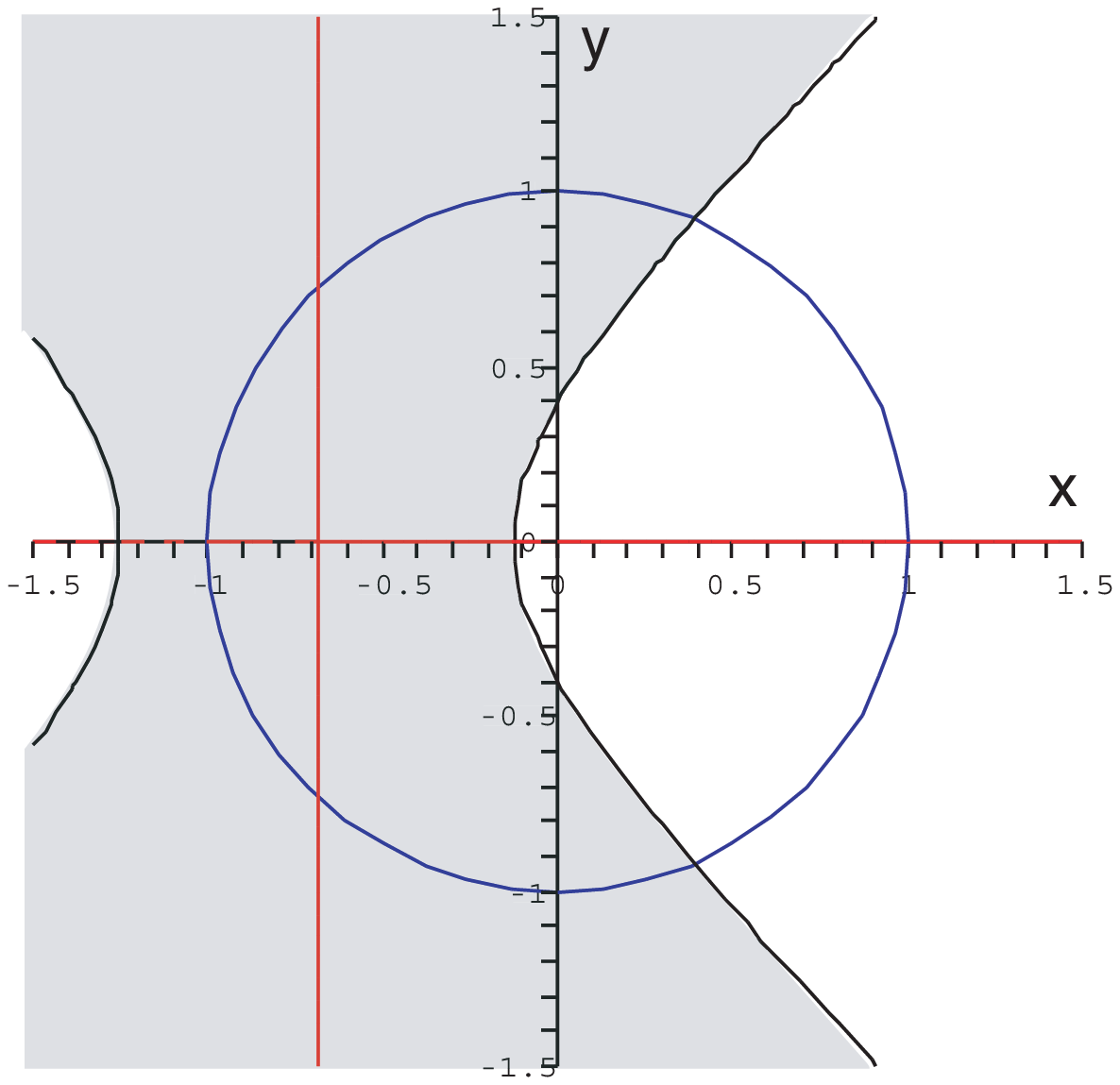}
\end{center}
\caption{Graphs  of $\ri(x,y)=0$  in the $(x,y)$-plane, a
degenerate hyperbola, the $y=0$ axis and the red line $x=x_0$ . Points  (black dots) where the degenerate hyperbola intersects the  unit circle (blue) with $\rr(x,y)>0$ (outside grey area) correspond to submaximal solutions.
The intersection at $(x,y)=(1,0)$ has maximal isotropy.
When $\Imag\left(\alpha/\beta\right)=0$ there is at most one intersection with $\rr(x,y)>0$ corresponding to a submaximal solution.
 Parameter values: 
 a) $\alpha/\beta=3/4$ (one  submaximal solution $(x,y)=  (-1,0)$), 
 b) $\alpha/\beta=5/4$ (no submaximal solution).}\label{figHypLines2}
\end{figure}

When $\Real\left(\alpha/\beta\right)=-1$, we have $K_i=0$ and hence $\ri(x,y)=0$ consists of
the two lines $x=1$ and $y=y_0$, shown in Figure~\ref{figHypLines1}. 
The line $x=1$ is tangent to the unit circle, hence, when $0<\left|\Imag\left(\alpha/\beta\right)\right|\le 4$,  the lines $\ri(x,y)=0$ intersect the circle at three points, except in the limit cases 
$\Imag\left(\alpha/\beta\right)=\pm 4$, when two solutions come together at a saddle-node.
The other case when $\ri(x,y)=0$ consists of  two lines 
occurs when $\Imag\left(\alpha/\beta\right)=0$, where $\ri(x,y)=0$ at $x=x_0$ or $y=y_0=0$, shown in 
Figure~\ref{figHypLines2}.
Then $\rr(x_0,y)<0$ and hence the only possible submaximal branch corresponds to $(x,y)=(-1,0)$.
Therefore, when the hyperbola $\ri(x,y)=0$ degenerates to two lines, there may be 0, 1 or 2 submaximal solution branches.

In the general case, when $\Real\left(\alpha/\beta\right)\ne -1$ and $\Imag\left(\alpha/\beta\right)\ne 0$ (Figures~\ref{figHyp1} and \ref{figHyp2}), the curve $\ri(x,y)=0$ is a hyperbola intersecting the unit circle at $(x,y)=(1,0)$ and at up to three other points. Again, these other intersections  may correspond to submaximal branches or not, depending on the sign of $\rr(x,y)$. 
Figure~\ref{figHyp1} shows examples with one branch and with no submaximal branches.
Examples with one, two and three submaximal branches are shown in Figure~\ref{figHyp2}.
These branches bifurcate from the fixed-point subspace $\{(z,z,0)\}$, when the intersection meets the line $\rr(x,y)=0$, or from the subspaces  $\{(z,z,\pm z)\}$. A pair  of branches may also terminate at a saddle-node bifurcation when the hyperbola is tangent to the unit circle.

\begin{figure}[ht]
\begin{center}
a)
 \includegraphics[scale=0.35]{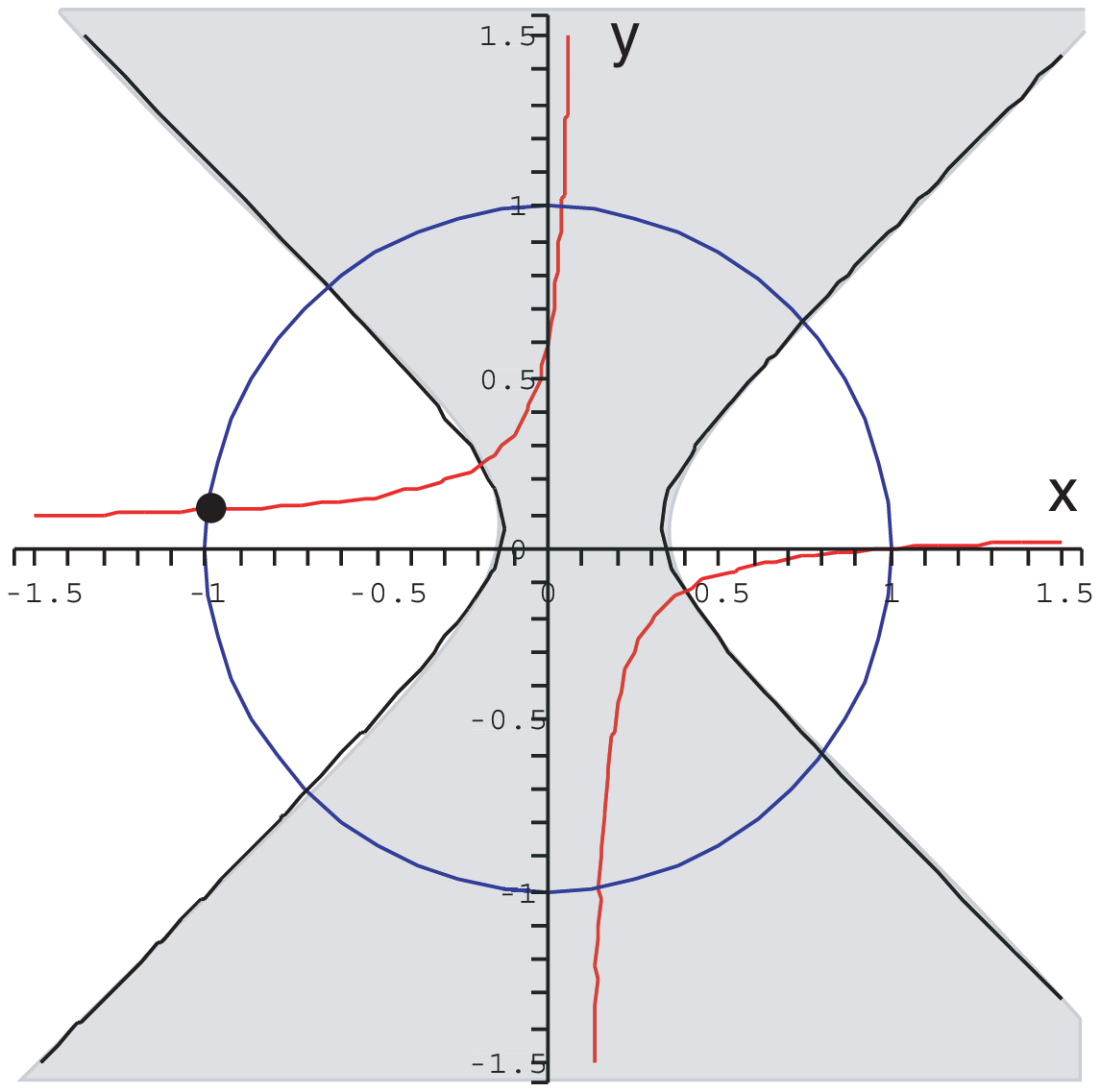}
 \qquad
 b)
  \includegraphics[scale=0.35]{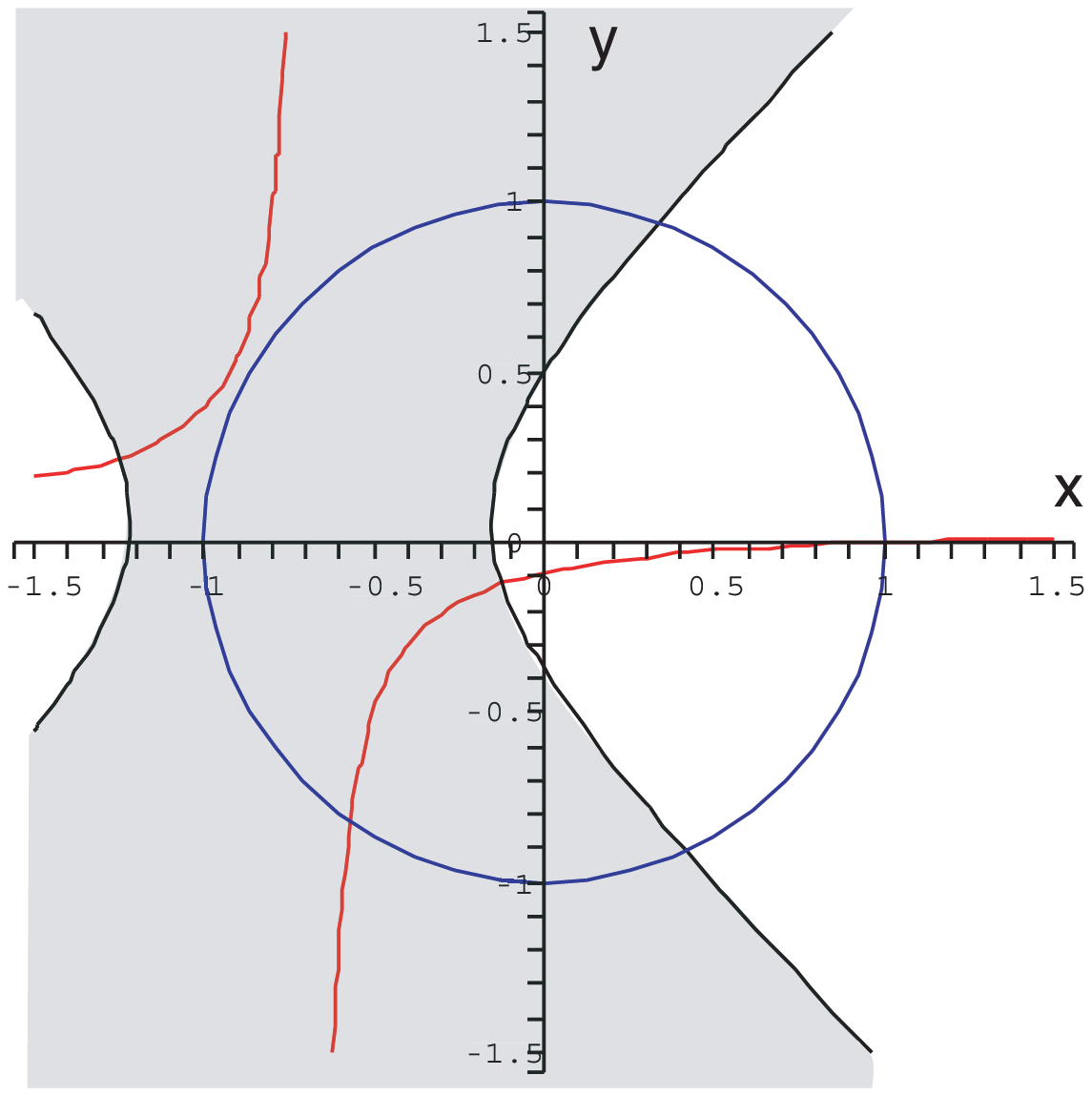}
\end{center}
\caption{Graphs  of $\ri(x,y)=0$ (red hyperbola) in the $(x,y)$-plane, for location of submaximal solutions (black dots), where the hyperbola intersects the  unit circle (blue) with $\rr(x,y)>0$  (outside grey area). 
The intersection at $(x,y)=(1,0)$ corresponds to a solution with maximal isotropy subgroup. Parameter values:
a) $\alpha/\beta=1/5+ i/4$ (one submaximal solution) and b) $\alpha/\beta=5/4+i/4$ (no submaximal solution).}
\label{figHyp1}
\end{figure} 

\begin{figure}[ht]
\begin{center}
a)
 \includegraphics[scale=0.30]{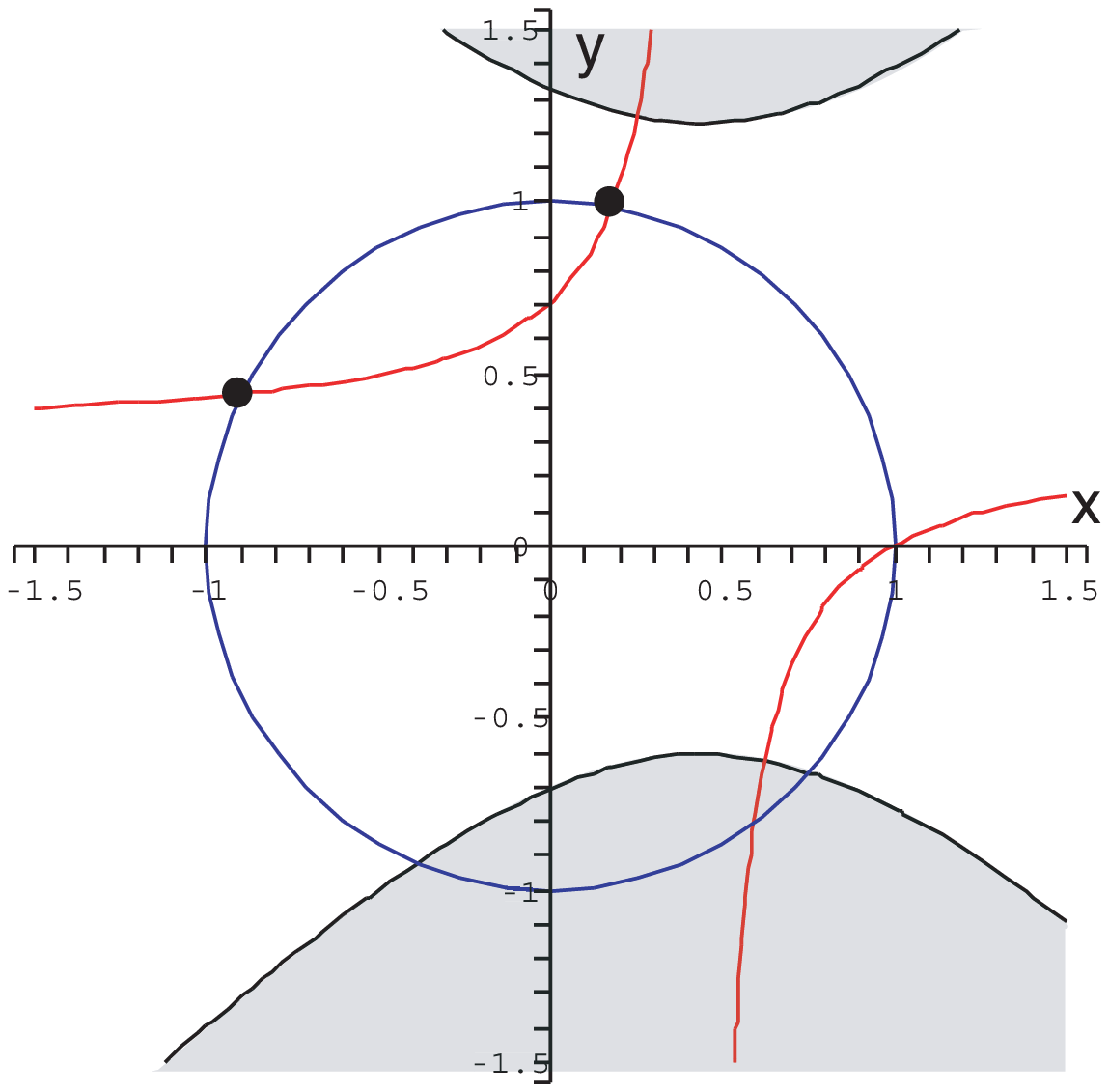}
 \  b)
  \includegraphics[scale=0.30]{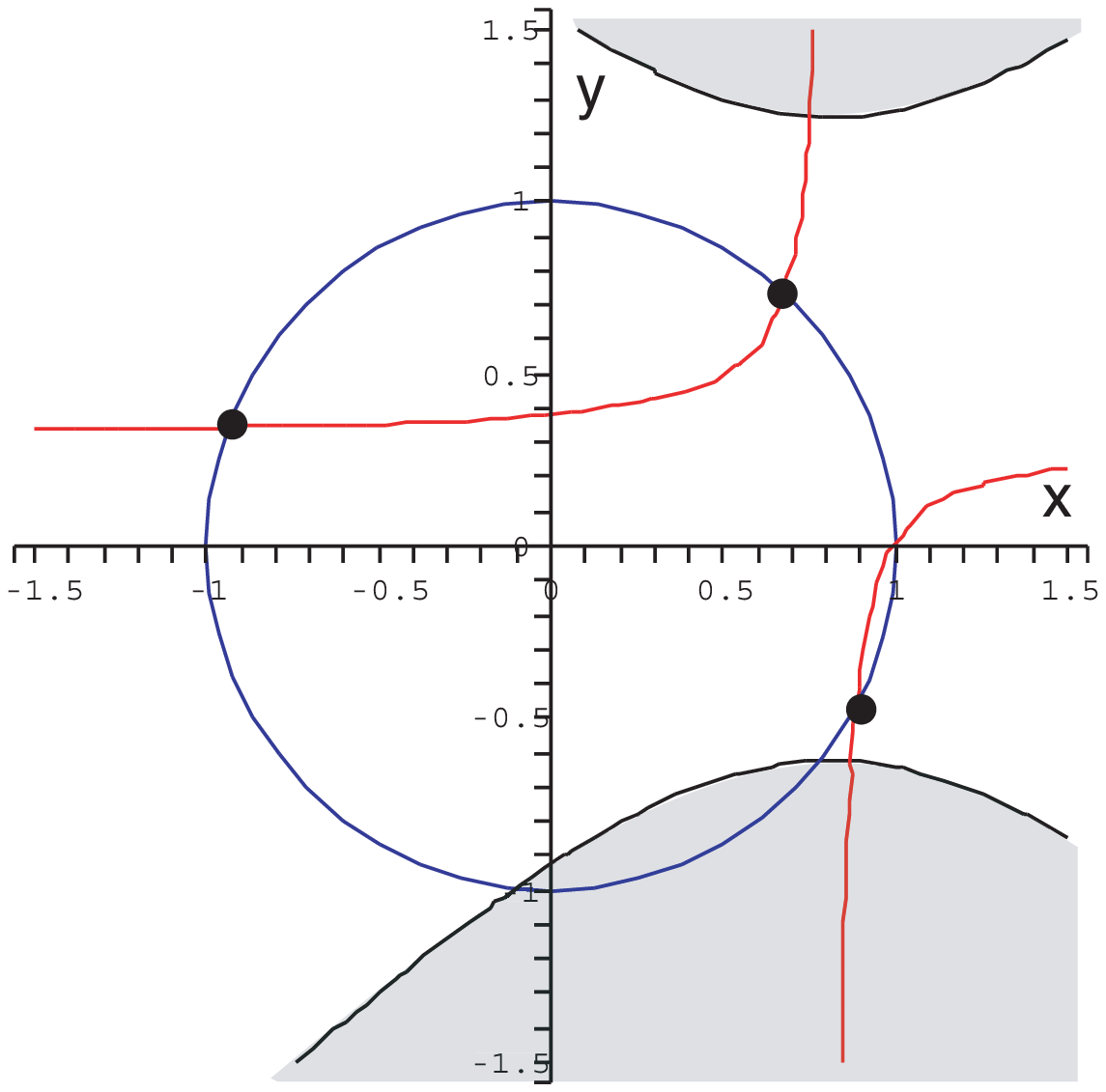} \qquad
\ c)
  \includegraphics[scale=0.30]{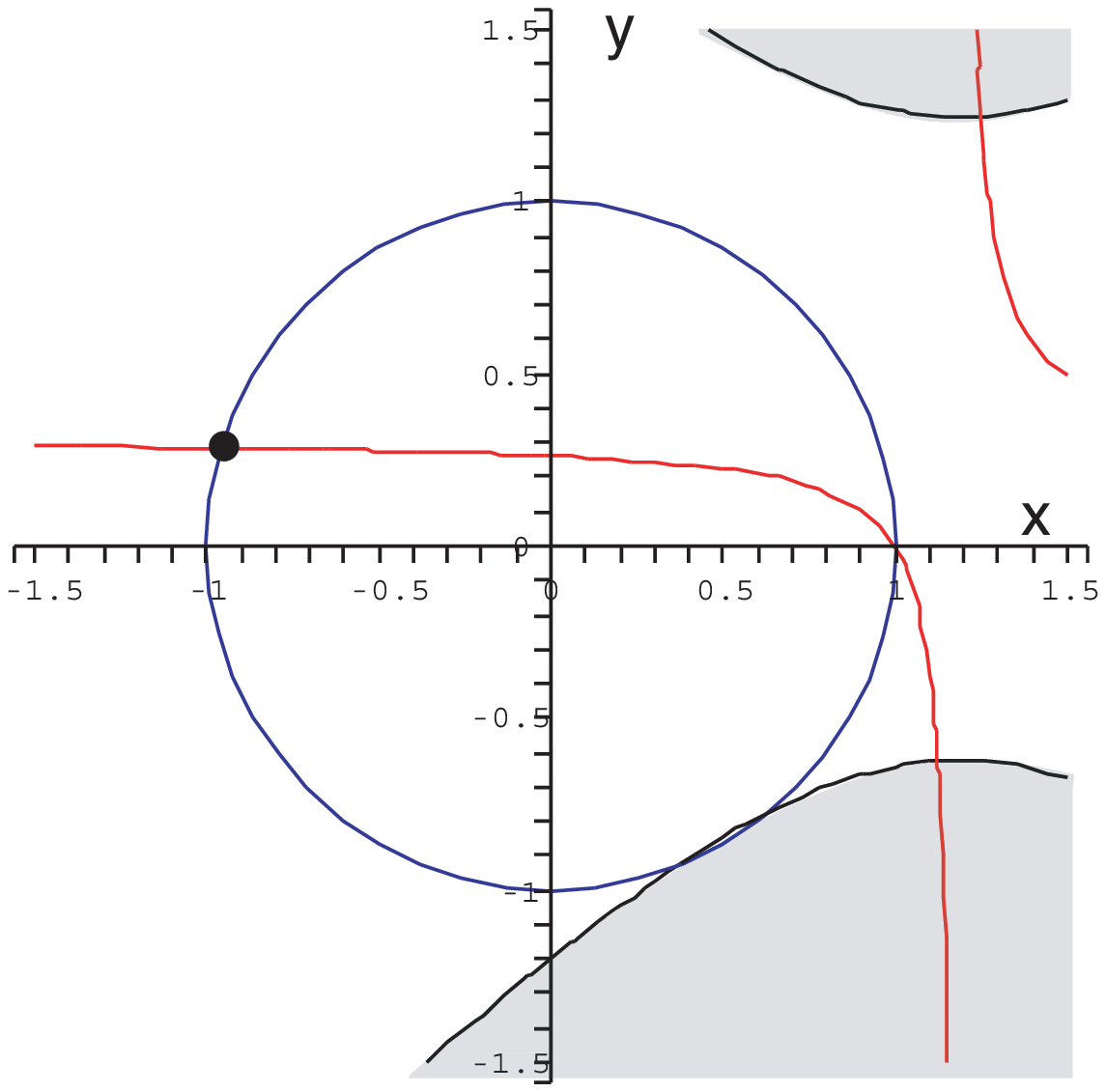}
\end{center}
\caption{Graphs  of $\ri(x,y)=0$ (red hyperbola)  in the $(x,y)$-plane, for location of submaximal solutions (black dots), where the hyperbola intersects the  unit circle (blue) with $\rr(x,y)>0$  (outside grey area). 
The intersection at $(x,y)=(1,0)$ corresponds to a solution with maximal isotropy subgroup.
Parameter values:
a) $\alpha/\beta=-1/4+ 5 i/4$ (two submaximal solutions), b) $\alpha/\beta=-3/4+ 5 i/4$ (three submaximal solutions), and c) $\alpha/\beta=-5/4+ 5 i/4$ (one submaximal solution).}
\label{figHyp2}
\end{figure}

\section{Spatio-temporal symmetries}\label{sectionSpatioTemporal}

The $H~\mathrm{mod}~K$ theorem \cite{Buono,GS03} states necessary and sufficient conditions  for the existence of a $\Gamma$-equivariant differential equation having a  periodic solution  with specified spatial symmetries $K\subset \Gamma$ and  spatio-temporal symmetries $H\subset \Gamma$, as explained in Section~\ref{secPreliminary}.

For a given $\Gamma$-equivariant differential equation, the $H~\mathrm{mod}~K$
Theorem
 gives necessary conditions on the  symmetries of periodic solutions.
Not all these solutions arise by a Hopf bifurcation from the trivial equilibrium --- we call this a \emph{primary Hopf bifurcation}.
In this section we address the question of determining which periodic
solution types, whose existence is guaranteed by the $H~\mathrm{mod}~K$ theorem, are obtainable at primary Hopf bifurcations, when the symmetry group $\Gamma$  is either $\tetra$ or $\OO$,  or $\cubo$.
The same branch predicted by the Hopf bifurcation theorem will have different spatial symmetries depending on the choice of $\Gamma$, and  hence, different dynamical interpretations.

The first step in  answering this question is the next lemma:

\begin{lema} Pairs of subgroups $H$, $K$ of symmetries of periodic solutions arising through a primary Hopf bifurcation  for $\Gamma=\tetra$ are given in Table~\ref{TableHopfTetra},
 for $\Gamma=\OO$ in Table~\ref{TableHopfCube},
 and for $\Gamma=\cubo$ in Table~\ref{TableHopfCube_I}.
\end{lema}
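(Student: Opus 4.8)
The plan is to determine, branch by branch, the full spatio-temporal symmetry group $\Sigma\subseteq\Gamma\times\Ss^1$ of each periodic solution emanating from the origin, and then to read off the pair $(H,K)$ by projecting $\Sigma$ onto $\Gamma$. Recall from Section~\ref{secPreliminary} that $\Sigma_{x(t)}$ is an isotropy subgroup for the $\Gamma\times\Ss^1$ action, that $H$ is the image of $\Sigma_{x(t)}$ under the projection $\Gamma\times\Ss^1\to\Gamma$, and that $K=\Sigma_{x(t)}\cap\bigl(\Gamma\times\{0\}\bigr)=\ker\Phi$, where $\Phi\colon H\to\Ss^1$ is the associated homomorphism. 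Hence, once $\Sigma_{x(t)}$ is known in terms of generators $(\gamma_i,\theta_i)$, the pair $(H,K)$ is obtained mechanically: $H$ is generated by the $\gamma_i$, the map $\Phi$ sends $\gamma_i$ to $\theta_i$, and $K$ is the kernel of $\Phi$, generated by the $\gamma_i$ with $\theta_i=0$, by the powers $\gamma_i^{q_i}$ when $\theta_i$ has finite order $q_i$, and by their conjugates.

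First I would enumerate the branches bifurcating from the trivial equilibrium. By the Equivariant Hopf Theorem there is exactly one branch for each $\CC$-axial subgroup of $\Gamma\times\Ss^1$, with spatio-temporal symmetry group equal to that subgroup; these are rows $(b)$--$(f)$ of Tables~\ref{table isotropy subgroups tetrahedral+k} and~\ref{table isotropy subgroups}. The remaining branches that bifurcate directly from the origin in the degree-three normal form \eqref{normal form} are the submaximal ones living in the four-dimensional fixed-point subspaces $\{(z_1,z_2,0)\}$ and $\{(z_1,z_1,z_2)\}$, whose existence and count were established in Sections~\ref{FixPiR} and~\ref{secFixZ2kTil}; that these invariant subspaces carry no further primary branches at this order follows from the analyses of Swift and Barany~\cite{barany} and Ashwin and Podvigina~\cite{Ashwin} recalled in Section~\ref{secHopf}. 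For a generic submaximal solution in either four-dimensional subspace, the modulus argument at the end of Section~\ref{FixPiR} (and its verbatim analogue for Section~\ref{secFixZ2kTil}) shows that no element $(\gamma,\theta)\in\Gamma\times\Ss^1$ with $\gamma$ outside the stabiliser of the subspace can map the periodic orbit to itself, not even with a nonzero phase shift, since such an element would force the modulus $r$ of the first coordinate to take one of the excluded values $0$, $1$ or $\infty$. Consequently the spatio-temporal symmetry group of these branches is exactly the $\ZZ_2$ stabilising the relevant subspace, namely $\widetilde{\ZZ}_2$ for $\{(z_1,z_2,0)\}$ and $\ZZ_2$ for $\{(z_1,z_1,z_2)\}$, i.e.\ rows $(g)$ and $(h)$ of the tables.

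With the groups $\Sigma$ identified, the remaining step is bookkeeping carried out separately for the three interpretations $\Gamma=\tetra$, $\Gamma=\OO$ and $\Gamma=\cubo$. For each branch I take the generators of $\Sigma$ from the appropriate column of Table~\ref{table isotropy subgroups}, delete the $\Ss^1$-components to obtain generators of $H$, and apply the recipe for $K=\ker\Phi$ described above; the quotient $H/K$ is then the cyclic image of $\Phi$, which identifies the wave type. Because we work throughout with matrices, there is no ambiguity in the case $\Gamma=\cubo$, where the representation $\tilde\rho_3$ is not faithful. Tabulating the resulting pairs for rows $(b)$--$(h)$ and for each of the three groups yields Tables~\ref{TableHopfTetra}, \ref{TableHopfCube} and~\ref{TableHopfCube_I}.

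The step I expect to be the main obstacle is not the final tabulation but the justification that the submaximal branches carry no hidden spatio-temporal symmetry: one must verify that allowing a nonzero phase shift $\theta$ creates no symmetries that are invisible to the pointwise isotropy computation. This is controlled by the same modulus argument that rules out extra spatial symmetries, but it has to be phrased for the time-parametrised orbit rather than for a single point, and checked in each of the two four-dimensional subspaces and for each of the three representations, where the stabilising $\ZZ_2$ is generated by $e^{\pi i}R$, $e^{\pi i}TC^2TC^2$ or $-TC^2TC^2$ in one case, and by $\kappa$, $e^{\pi i}T^2C^2TC$ or $-T^2C^2TC$ in the other.
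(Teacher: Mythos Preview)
Your proposal is correct and follows the same approach as the paper: the $\CC$-axial subgroups supply rows $(b)$--$(f)$ via the Equivariant Hopf Theorem, the submaximal branches of Sections~\ref{FixPiR} and~\ref{secFixZ2kTil} supply rows $(g)$ and $(h)$, and the pair $(H,K)$ is then read off from the generators of $\Sigma$ in Table~\ref{table isotropy subgroups}. The paper's own proof is a two-sentence pointer to exactly these ingredients, so your version is more explicit but not different in substance; your worry about ``hidden'' spatio-temporal symmetries is already handled, since the isotropy groups in Tables~\ref{table isotropy subgroups tetrahedral+k} and~\ref{table isotropy subgroups} are computed in $\Gamma\times\Ss^1$ and therefore coincide with the full spatio-temporal symmetry groups of the normal-form periodic orbits.
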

\begin{proof}
The symmetries corresponding to the $\CC$-axial subgroups of $\Gamma\times\Ss^1$ provide the first five rows of Tables~\ref{TableHopfTetra},  \ref{TableHopfCube}  and \ref{TableHopfCube_I}.
The last two rows correspond to the submaximal branches found in \ref{FixPiR} and  \ref{secFixZ2kTil} above.
\end{proof}

\begin{table}
\caption{Spatio-temporal symmetries of  solutions arising through primary Hopf bifurcation from the trivial equilibrium and number of branches, for the action of $\tetra\times\Ss^1$ on $\CC^3$.
The index refers to Table~\ref{table isotropy subgroups tetrahedral+k}.
Subgroups of $\tetra\times\Ss^1$ below the  dividing line are not  $\CC$-axial.}\label{TableHopfTetra}
\begin{tabular}{|c|c|c|c|c|}
\hline
index&subgroup of&Spatio-temporal &Spatial  &number \\
& $\Sigma\subset\tetra\times\Ss^1$&symmetries $H$ & symmetries $K$& of branches\\
&generators&generators& generators&\\
\hline\hline
(b)&$\{e^{\pi i}  C^2RC,\kappa\}$
&$\{C^2RC,\kappa\}$&$\{R,\kappa\}$&$3$\\
(c)&$\{C,\kappa\}$&$\{C,\kappa\}$&$\{C,\kappa\}$&$4$\\
(d)&$\{e^{-2\pi i/3}  C\}$&$\{C$\}&$\{Id\}$&$8$\\
(e)&$\{e^{\pi i}  R,\kappa \}$&$\{R,\kappa\}$&$\{\kappa\}$&$6$\\
(f)&$\{e^{\pi i/2}  C^2R\kappa\}$&$\{C^2R\kappa\}$&$\{Id\}$&$6$\\
\hline\hline
(g)&$\{e^{\pi i}R\}$&$\{R\}$&$\{Id\}$&12\\
(h)&$\{\kappa\}$&$\{\kappa\}$&$\{\kappa\}$&12\\
\hline
\end{tabular}
\end{table}

\begin{table}
\caption{Spatio-temporal symmetries of  solutions arising through primary Hopf bifurcation from the trivial equilibrium and number of branches,  for the action of $\OO\times\Ss^1$ on $\CC^3$.
The index refers to Table~\ref{table isotropy subgroups tetrahedral+k}.
Subgroups of $\OO\times\Ss^1$ below the  dividing line are not  $\CC$-axial.}\label{TableHopfCube}
\begin{tabular}{|c|c|c|c|c|}
\hline
index&subgroup of &Spatio-temporal &Spatial  &number \\
& $\Sigma\subset\OO\times\Ss^1$&symmetries $H$ & symmetries $K$& of branches\\
&generators&generators& generators&\\
\hline\hline
(b)&$\{TC^2,e^{\pi i}T^2\}$& $\{TC^2,T^2\}$   &$\{TC^2\}$&$3$\\
(c)&$\{C,e^{\pi i}T^2C^2T\}$& $\{C,T^2C^2T\}$   &$\{C\}$&$4$\\
(d)&$\{e^{-2\pi i/3}C\}$& $\{C\}$   &$\{Id\}$&$8$\\
(e)&$\{e^{\pi i}TC^2TC^2,T^3C^2\}$&$\{TC^2TC^2,T^3C^2\}$  &$\{T^3C^2\}$&$6$\\
(f)&$\{e^{-\pi i/2}T\}$&   $\{T\}$       &$\{Id\}$&$6$\\
\hline\hline
(g)&$\{e^{\pi i}TC^2TC^2\}$&$\{TC^2TC^2\}$&$\{Id\}$&12\\
(h)&$\{e^{\pi i}T^2C^2TC\}$&$\{T^2C^2TC\}$&$\{Id\}$&12\\
\hline
\end{tabular}
\end{table}

\begin{table}
\caption{Spatio-temporal symmetries of  solutions arising through primary Hopf bifurcation from the trivial equilibrium and number of branches,  for the action of $\cubo\times\Ss^1$ on $\CC^3$.
The index refers to Table~\ref{table isotropy subgroups tetrahedral+k}.
Subgroups of $\cubo\times\Ss^1$ below the  dividing line are not  $\CC$-axial.}\label{TableHopfCube_I}
\begin{tabular}{|c|c|c|c|c|}
\hline
index&subgroup of&Spatio-temporal &Spatial  &number \\
& $\Sigma\subset\cubo\times\Ss^1$&symmetries $H$ & symmetries $K$& of branches\\
&generators&generators& generators&\\
\hline\hline
(b)&$\{TC^2,-T^2\}$& $\{TC^2,-T^2\}$   &$\{TC^2,-T^2\}$&$3$\\
(c)&$\{C,-T^2C^2T\}$& $\{C,-T^2C^2T\}$   &$\{C,-T^2C^2T\}$&$4$\\
(d)&$\{e^{-2\pi i/3}C\}$& $\{C\}$   &$\{Id\}$&$8$\\
(e)&$\{-TC^2TC^2,T^3C^2\}$&$\{-TC^2TC^2,T^3C^2\}$  &$\{-TC^2TC^2,T^3C^2\}$&$6$\\
(f)&$\{e^{-\pi i/2}T\}$&   $\{T\}$       &$\{Id\}$&$6$\\
\hline\hline
(g)&$\{-TC^2TC^2\}$&$\{-TC^2TC^2\}$&$\{-TC^2TC^2\}$&24\\
(h)&$\{-T^2C^2TC\}$&$\{-T^2C^2TC\}$&$\{-T^2C^2TC\}$&24\\
\hline
\end{tabular}
\end{table}

The next step is to identify the subgroups corresponding to Theorem~\ref{teorema H mod K}.

\begin{table}
\begin{center}
\caption{Possible pairs $H$, $K$ for Theorem~\ref{teorema H mod K} in the action of $\tetra$ on
 $\CC^3$.}\label{tableTetraHK}
\end{center}
\begin{tabular}{|c|c|c|c|c|c|}
\hline
$K$&Generators of $K$&$H$&Generators of $H$&$\fix(K)$&dim\\
\hline\hline
$\mathbb{D}_2$&$\{R,\kappa\}$&
$K$&$\{R,\kappa\}$&$\{(z,0,0)\}$&$2$  \\
&&$N(K)=\mathbb{D}_4$&$\{C^2RC,\kappa\} $&&\\
\hline
$\mathbb{D}_3$ &$\{C,\kappa\}$&
$N(K)=K$&{$\{C,\kappa\}$}&$\{(z,z,z)\}$&$2$\\
\hline
$\ZZ_2$&$\{\kappa\}$&$K$&$\{\kappa\}$&$\{(z_1,z_2,z_2)\}$&$4$\\
&&$N(K)=\mathbb{D}_2$&$\{R,\kappa\}$&&\\
\hline
$\one$&$\{Id\}$&$\ZZ_4$&$\{C^2R\kappa\}$&$\CC^3$&6\\
&&$\ZZ_3$&$\{C\}$&&\\
&&$\ZZ_2$&$\{R\}$&&\\
&&$\ZZ_2$&$\{\kappa\}$&&\\
&&$K$&$\{Id\}$&&\\
\hline
\end{tabular}
\end{table}

\begin{lema}\label{lema pairs H K}
Pairs of  subgroups $H,K$ satisfying conditions $(a)-(d)$ of Theorem \ref{teorema H mod K} are given in 
Table~\ref{tableTetraHK} for $\Gamma=\tetra$,
in Table~\ref{tableOctaHK} for $\Gamma=\OO$
 and in Table~\ref{tableOcta-1} for $\Gamma=\cubo$.
\end{lema}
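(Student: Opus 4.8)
The proof will be a finite bookkeeping computation driven by the isotropy lattices of $\Gamma$ acting on $\CC^3\sim\RR^6$. First I would enumerate the admissible spatial symmetry groups $K$: by conditions (b) and (c) of Theorem~\ref{teorema H mod K}, $K$ must be an isotropy subgroup with $\dim\fix(K)\geqslant 2$, and these are read off directly from the isotropy lattices in Figures~\ref{FigLatticeTetra2} and \ref{FigLatticeOcta}, discarding the subgroups whose fixed-point subspace is a point or a line. For $\Gamma=\tetra$ this leaves $\mathbb{D}_2$, $\mathbb{D}_3$, $\ZZ_2(\kappa)$ and $\one$, with fixed-point subspaces of dimensions $2,2,4,6$ respectively; the lists for $\OO$ and $\cubo$ are obtained the same way from the right-hand lattice of Figure~\ref{FigLatticeOcta}.

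Next, for each such $K$ I would find the candidate groups $H$. Condition (a) forces $K\trianglelefteq H$ with $H/K$ cyclic, and since $H$ must preserve $\fix(K)$ (which in all these groups means $H\subseteq N(K)$), the candidates are the subgroups $H$ with $K\subseteq H\subseteq N(K)$ and $H/K$ cyclic. I would compute $N(K)$ in each case (e.g. $N(\mathbb{D}_2)=\mathbb{D}_4$, $N(\mathbb{D}_3)=\mathbb{D}_3$, $N(\ZZ_2(\kappa))=\mathbb{D}_2$, $N(\one)=\Gamma$ for $\tetra$) and list the intermediate cyclic-quotient extensions. When $\dim\fix(K)=2$, part (c) collapses this list to $H=K$ or $H=N(K)$, so only two pairs survive per such $K$.

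It then remains to test condition (d): $H$ must fix a connected component of $\fix(K)\setminus L_K$, where $L_K=\bigcup_{\gamma\notin K}\fix(\gamma)\cap\fix(K)$. For the $2$-dimensional cases $L_K$ is a finite union of lines through the origin in a plane, the components are open sectors, and one checks by hand that both $H=K$ and $H=N(K)$ fix one (the relevant sectors, since $N(K)/K$ acts on the plane either trivially on a component or by a reflection fixing a sector). For $K=\ZZ_2(\kappa)$ (dimension $4$) and $K=\one$ (dimension $6$) I would write $L_K$ explicitly as the union of the proper fixed-point subspaces contained in $\fix(K)$ — these are exactly the lower rows of Table~\ref{table isotropy subgroups tetrahedral+k} restricted to $\fix(K)$ — identify the connected components of the complement, and compute the induced action of $N(K)/K$ on the set of components; a pair $(H,K)$ satisfies (d) precisely when $H/K$ stabilises some component.

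The main obstacle is the last step for $K=\one$, where $\fix(K)=\CC^3$, $L_K$ is the union of all proper fixed-point subspaces, and the complement has several components permuted by $\Gamma$: for each cyclic subgroup $H\subseteq\Gamma$ (up to conjugacy) one must decide whether one of these components is $H$-invariant. This is exactly where the three representations $\tetra$, $\OO$, $\cubo$ diverge, so the argument must be carried out separately for each group using the explicit generators of Section~\ref{section 1}, and the submaximal pairs found in \ref{FixPiR} and \ref{secFixZ2kTil} reappear here as the witnesses that $H=\ZZ_2$, $\ZZ_3$, $\ZZ_4$ over $K=\one$ do satisfy (d). Assembling the surviving pairs $(H,K)$ into Tables~\ref{tableTetraHK}, \ref{tableOctaHK} and \ref{tableOcta-1} completes the proof.
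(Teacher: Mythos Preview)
Your overall strategy (enumerate isotropy subgroups $K$, compute $N(K)$, list cyclic-quotient extensions $H$) matches the paper's. But your treatment of condition~(d) contains a real error, and this is precisely the step the paper dispatches in one line.

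You write that for $\dim\fix(K)=2$ the set $L_K$ is ``a finite union of lines through the origin in a plane'' with complement a union of ``open sectors'', and that for $K=\one$ the complement of $L_K$ in $\CC^3$ ``has several components permuted by $\Gamma$''. Both claims are false. Each $\gamma\in\Gamma$ acts $\CC$-linearly on $\CC^3$, so $\fix(\gamma)$ is a complex subspace and has even real dimension; hence every piece $\fix(\gamma)\cap\fix(K)$ of $L_K$ has even real codimension in $\fix(K)$. A finite union of subspaces of real codimension~$\ge 2$ never disconnects a real vector space, so $\fix(K)\setminus L_K$ is \emph{connected} in every case. Condition~(d) is therefore automatic for \emph{all} candidate $H$, and no component-by-component analysis is needed. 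In particular, the submaximal Hopf branches of \S\ref{FixPiR}--\S\ref{secFixZ2kTil} play no role here; they are dynamical facts, whereas Lemma~\ref{lema pairs H K} is purely group-theoretic.

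With condition~(d) out of the way, what actually remains is exactly what the paper does: for $K$ with $\dim\fix(K)=2$ use (c) to get $H\in\{K,N(K)\}$; for the four-dimensional $K$'s in $\cubo$ compute $N(K)/K$ explicitly and list its cyclic subgroups (the paper finds $N(K)/K$ non-cyclic of order~$4$ or~$8$, giving several $H$'s); and for $K=\one$ list the cyclic subgroups of $\Gamma$ up to conjugacy.
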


\begin{proof}
Conditions $(a)-(d)$ of Theorem~\ref{teorema H mod K} are immediate for the isotropy subgroups with two-dimensional fixed-point subspaces. For $\Gamma=\OO$, these are all the non-trivial subgroups.

 Condition $(d)$ has to be verified for the isotropy subgroups $K$ of $\tetra$ and of  $\cubo$ that have a four-dimensional fixed-point subspace, as well as for $K=\one$ for the three groups $\Gamma$.
Since,  for each $\gamma\in\Gamma$, we have that $\dim\fix(\gamma)$ is even, then  $L_K$ is always the union of a finite number of subspaces with even real  dimension.
Therefore, $L_K$ has even codimension in $\fix(K)$ and hence
$\fix(K)\backslash L_K$ is connected and condition $(d)$ follows  in all cases.

For $\Gamma=\cubo$, condition $(a)$ has to be verified for the two subgroups with four-dimensional fixed-point subspaces.
For $K=\ZZ_2=\langle -T^2C^2TC\rangle$ we have 
$N\left(K\right)=\langle  T^2C^2TC,T^3C^2,-I \rangle$ a subgroup of order 8.
All elements of $N\left(K\right)$,
except for $Id$, have order 2, hence $N\left(K\right)/K$ is not cyclic. This gives rise to 3 non-conjugate possibilities for $H$, all isomorphic to $\mathbb{D}_2$.

For $K=\widehat{\ZZ}_2=\langle -TC^2TC^2\rangle$ we have 
$N(\widehat{\ZZ}_2)=\langle TC^2, T^2,-I \rangle$,  a subgroup of order 16,
where $TC^2$ has order 4, $T^2$ and $-Id$ have order 2, hence $N(\widehat{\ZZ}_2)/\widehat{\ZZ}_2$ is not cyclic. This gives rise to 3 non-conjugate possibilities for $H$,  one  isomorphic to $\mathbb{D}_4$, the others isomorphic to $\mathbb{D}_2$.

Condition $(a)$ also has to be used for the subgroup $K=\one$ in all cases, where the possibilities for $H$ are then all the cyclic subgroups of $\Gamma$. We omit this information in Table~\ref{tableOcta-1}, since there are too many subgroups in this case.
\end{proof}

Of all the  subgroups of $\tetra$ of order two that appear as $H$ in a pair $H\sim\ZZ_n,K=\one$
in Table~\ref{tableTetraHK}, only $H\sim\ZZ_3(C)$ is an isotropy subgroup, as can be seen in 
Figure~\ref{FigLatticeTetra2}.
The two subgroups of $\tetra$ of order two, $H=\ZZ_2(\kappa)$ and $H=\ZZ_2(R)$ are not  conjugate, since they are generated by the image by $\rho_1$ of the non conjugate elements $(34)$ and $(12)(34)$ of $S_4$, respectively.
Another way to see this is to observe that 
$\kappa$ fixes a four-dimensional subspace, whereas $R$ fixes a subspace of dimension two. 
In contrast, all but one of the cyclic subgroups of $\OO$ are isotropy subgroups, as can be seen comparing Table~\ref{tableOctaHK} to Figure~\ref{FigLatticeOcta},  and  noting that $\ZZ_4(T)$ is  conjugate to $\ZZ_4(TC^2)$ and that $\ZZ_2(T^2C^2TC)$ is  conjugate to $\ZZ_2(T^3C^2)$.
This will have a marked effect on the primary Hopf bifurcations.

\begin{table}
\caption{{ Possible pairs $H$, $K$ for Theorem~\ref{teorema H mod K} in the action of $\OO$ on $\CC^3$. }}\label{tableOctaHK}
\begin{tabular}{|c|c|c|c|c|c|}
\hline
$K$&Generators of $K$&$H$&Generators of $H$
&$\fix(K)$&$\mathrm{dim}$\\
\hline\hline
$\ZZ_4$&$\{TC^2\}$&{ $K$}&$\{TC^2\}$&$\{(z,0,0)\}$&$2$  \\
&& $N(K)=\mathbb{D}_4$&$\{T^2,TC^2\}$&&  \\
\hline
$\ZZ_3$&$\{C\}$&{ $K$}&$\{C\}$&$\{(z,z,z)\}$&$2$  \\
&& $N(K)=\mathbb{D}_3$&$\{C,T^2C^2T\}$&&  \\
\hline
$\ZZ_2$ &$\{T^3C^2\}$&{ $K$}&$\{T^3C^2\}$&$\{(0,z,z)\}$&$2$  \\
&& $N(K)=\mathbb{D}_2$&$\{TC^2TC^2,T^3C^2\}$&&  \\
\hline
$\one$&$\{Id\}$&$\ZZ_4$&$\{T\}$&$\CC^3$&6\\
&&$\ZZ_3$&$\{C\}$&&\\
&&$\ZZ_2$&$\{TC^2TC^2\}$&&\\
&&$\ZZ_2$&$\{T^2C^2TC\}$&&\\
&&$K$&$\{Id\}$&&\\
\hline
\end{tabular}
\end{table}

\begin{table}
\caption{{ Possible pairs $H$, $K$, with $K\ne\one$ for Theorem~\ref{teorema H mod K} in the action of $\cubo$ on $\CC^3$. }}\label{tableOcta-1}
\begin{tabular}{|c|c|c|c|c|c|}
\hline
$K$&Generators of $K$&$H$&Generators of $H$
&$\fix(K)$&$\mathrm{dim}$\\
\hline\hline
$\mathbb{D}_4$&$\{TC^2,-T^2\}$& $K$&$\{TC^2,-T^2\}$&$\{(z,0,0)\}$&$2$  \\
&&$N(K)=\langle K,- Id\rangle$&$\{TC^2,T^2,- Id\}$&& \\
\hline
$\mathbb{D}_3$&$\{C,-T^2C^2T\}$&$K$&$\{C,-T^2C^2T\}$&$\{(z,z,z)\}$&$2$  \\
&&$N(K)=\langle K,- Id\rangle$&$\{C,T^2C^2T,- Id\}$&&  \\
\hline
$\mathbb{D}_2$ &$\{T^3C^2,-T^2C^2TC^2\}$&{ $K$}&$\{T^3C^2,-T^2C^2TC^2\}$&$\{(0,z,z)\}$&$2$  \\
&& $N(K)=\langle K,- Id\rangle$&$\{T^3C^2,T^2C^2TC^2,- Id\}$&&  \\
\hline
$\ZZ_2$&$\{-T^2C^2TC\}$&$K$&$\{-T^2C^2TC\}$&$\{(z_1,z_2,z_2)\}$&4\\
&&$\ZZ_2\oplus\ZZ_2(- Id)$&$\{T^2C^2TC,- Id\}$&&\\
&&$\ZZ_2\oplus\widehat{\ZZ}_2$&$\{-T^2C^2TC,TC^2TC^2\}$&&\\
\hline
$\widehat{\ZZ}_2$&$\{-TC^2TC^2\}$&$K$&$\{-TC^2TC^2\}$&$\{(0,z_1,z_2)\}$&4\\
&&$\ZZ_4(TC^2)\oplus\ZZ_2(-Id)$&$\{- Id,TC^2\}$&&\\
&&$\widehat{\ZZ}_2\oplus\ZZ_2(- Id)$&$\{- Id,TC^2TC^2\}$&&\\
&&$\widehat{\ZZ}_2\oplus\ZZ_2(T^2)$&$\{-TC^2TC^2,T^2\}$&&\\
&&$\widehat{\ZZ}_2\oplus\ZZ_2(-T^2)$&$\{-TC^2TC^2,-T^2\}$&&\\
&&$\widehat{\ZZ}_2\oplus\ZZ_2(T^2C^2TC)$&$\{-TC^2TC^2,T^2C^2TC\}$&&\\
&&$\ZZ_2\oplus\widehat{\ZZ}_2$&$\{-TC^2TC^2,-T^2C^2TC\}$&&\\
\hline
\end{tabular}
\end{table}

\begin{prop}\label{propPrimaryTetra}
For the representation of the group $\tetra$ on $\RR^6\sim\CC^3$, all 
pairs of  subgroups $H,K$ satisfying the conditions of  the $H~\mathrm{mod}~K$ Theorem,  with $H\ne\one$, occur as spatio-temporal symmetries of periodic solutions arising through a primary Hopf bifurcation, except for:
\begin{enumerate}
\item\label{HKD2}
the  pair $H=K=\mathbb{D}_2$, generated by $\{R,\kappa\}$;
\item\label{HZ2K1}
the pair $H=\ZZ_2(\kappa)$, $K=\one$.
\end{enumerate}
\end{prop}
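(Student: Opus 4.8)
The plan is to compare the two tables directly: Table~\ref{TableHopfTetra}, listing the $(H,K)$ pairs that occur at primary Hopf bifurcation for $\Gamma=\tetra$, against Table~\ref{tableTetraHK}, listing all $(H,K)$ pairs satisfying conditions $(a)$--$(d)$ of the $H~\mathrm{mod}~K$ theorem. Both lists are already established (the first by the Lemma preceding Table~\ref{TableHopfTetra}, the second by Lemma~\ref{lema pairs H K}), so what remains is a finite bookkeeping argument, organized by the isotropy subgroup $K$. First I would run through the pairs with $\dim\fix(K)=2$: for $K=\mathbb{D}_3$ the only admissible $H$ is $N(K)=K$ itself, and this appears as row (c) of Table~\ref{TableHopfTetra}; for $K=\mathbb{D}_2$ (generated by $\{R,\kappa\}$), condition $(c)$ of Theorem~\ref{teorema H mod K} allows $H=K$ or $H=N(K)=\mathbb{D}_4$, and row (b) of Table~\ref{TableHopfTetra} realizes $H=\mathbb{D}_4$, $K=\mathbb{D}_2$ --- but the pair $H=K=\mathbb{D}_2$ does \emph{not} appear, which is exactly exclusion~\eqref{HKD2}. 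The point to make here is that a primary Hopf branch with these symmetries would require a $\CC$-axial subgroup of $\tetra\times\Ss^1$ projecting onto $H=\mathbb{D}_2$ with trivial twist homomorphism; inspection of Table~\ref{table isotropy subgroups} shows the $\CC$-axial subgroup with fixed subspace $\{(z,0,0)\}$ is $\mathbb{D}_4$ generated by $\{e^{\pi i}C^2RC,\kappa\}$, whose image in $\tetra$ is $\mathbb{D}_4\ne\mathbb{D}_2$, so $H=\mathbb{D}_2$ is simply not attained.

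Next I would handle $K=\ZZ_2(\kappa)$, with $\dim\fix(K)=4$: Table~\ref{tableTetraHK} allows $H=K=\ZZ_2(\kappa)$ and $H=N(K)=\mathbb{D}_2$. Row (h) of Table~\ref{TableHopfTetra} realizes $H=K=\ZZ_2(\kappa)$ via the submaximal branch in $\fix(\widetilde\ZZ_2)$ analysed in Section~\ref{secFixZ2kTil}, but the pair $H=\mathbb{D}_2$, $K=\ZZ_2(\kappa)$ does not occur; however this pair is \emph{not} in the exclusion list because $N(\ZZ_2(\kappa))=\mathbb{D}_2$ is generated by $\{R,\kappa\}$ --- wait, I must double-check: the statement excludes only $H=K=\mathbb{D}_2(\{R,\kappa\})$ and $H=\ZZ_2(\kappa),K=\one$, so I need to confirm that $H=N(\ZZ_2(\kappa))=\mathbb{D}_2$, $K=\ZZ_2(\kappa)$ actually \emph{does} occur at a primary Hopf bifurcation. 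This is where I expect the argument to require the most care: one must check whether the four-dimensional branch with $K=\ZZ_2(\kappa)$ found in Section~\ref{secFixZ2kTil} genuinely has spatio-temporal symmetry $\mathbb{D}_2$ rather than $\ZZ_2$, i.e. whether there is a $\CC$-axial subgroup of $\tetra\times\Ss^1$ realizing the nontrivial homomorphism $\mathbb{D}_2\to\Ss^1$ with kernel $\ZZ_2(\kappa)$. Comparing with row (e) of Table~\ref{TableHopfTetra} (which has $H=\{R,\kappa\}$, $K=\{\kappa\}$, arising from the $\CC$-axial subgroup $\mathbb{D}_2=\langle e^{\pi i}R,\kappa\rangle$) shows this pair is indeed realized, so the only exclusion among $K=\ZZ_2(\kappa)$ pairs with $H\ne\one$ is none --- consistent with the statement, since \eqref{HKD2} is a different $\ZZ_2$.

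Finally I would treat $K=\one$. Condition $(a)$ forces $H$ cyclic, and Table~\ref{tableTetraHK} lists $H\in\{\ZZ_4(C^2R\kappa),\ \ZZ_3(C),\ \ZZ_2(R),\ \ZZ_2(\kappa),\ \one\}$. Since the proposition restricts to $H\ne\one$, I must show that $H=\ZZ_4(C^2R\kappa)$, $H=\ZZ_3(C)$ and $H=\ZZ_2(R)$ all occur at primary Hopf (rows (f), (d), (g) respectively of Table~\ref{TableHopfTetra}), while $H=\ZZ_2(\kappa)$, $K=\one$ does \emph{not} --- this is exclusion~\eqref{HZ2K1}. The obstruction to realizing this last pair is the key structural fact already emphasized in the text: among the order-two subgroups of $\tetra$ appearing as $H$ with $K=\one$, only $\ZZ_3(C)$ is an isotropy subgroup, and more to the point, $\ZZ_2(\kappa)$ fixes a four-dimensional subspace whereas $\ZZ_2(R)$ fixes a two-dimensional one; any $\CC$-axial subgroup of $\tetra\times\Ss^1$ whose projection is $\ZZ_2(\kappa)$ would give a periodic solution whose trajectory lies in $\fix(\ZZ_2(\kappa))\cong\CC^2$ with no further constraint, but the Hopf branches actually produced (rows (g),(h) of Table~\ref{TableHopfTetra}) have either $H=\ZZ_2(R)$, $K=\one$ or $H=K=\ZZ_2(\kappa)$ --- there is no $\CC$-axial subgroup realizing $H=\ZZ_2(\kappa)$ with trivial kernel. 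The hard part of the whole proof is precisely this verification: one must be certain that no $\CC$-axial subgroup of $\tetra\times\Ss^1$ other than those in Table~\ref{table isotropy subgroups}, and none of the submaximal branches of Sections~\ref{FixPiR}--\ref{secFixZ2kTil}, produces these two excluded pairs, which ultimately rests on the observation that $(34)$ and $(12)(34)$ are non-conjugate in $S_4$ so $\ZZ_2(\kappa)$ and $\ZZ_2(R)$ are genuinely different subgroups of $\tetra$, and only the latter arises with a compatible $\Ss^1$-twist.
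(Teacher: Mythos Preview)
Your approach is the same as the paper's: both proofs amount to a direct comparison of Table~\ref{TableHopfTetra} (primary Hopf pairs) against Table~\ref{tableTetraHK} ($H~\mathrm{mod}~K$ pairs), organized case by case, and both rely on the two preceding lemmas to justify that these tables are complete. Your case-by-case walk-through and the paper's shorter inspection reach identical conclusions.

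One small point where the paper is crisper: in explaining why $H=K=\mathbb{D}_2$ is excluded, you argue that no $\CC$-axial subgroup projects onto $\mathbb{D}_2$ with trivial twist. That is true but not quite enough on its own, since submaximal (non-$\CC$-axial) branches also arise at primary Hopf bifurcation. The paper's observation is simply that $\mathbb{D}_2$ is not an isotropy subgroup of $\tetra\times\Ss^1$ \emph{at all}, which rules out axial and submaximal branches in one stroke. Your argument is still valid because you fall back on the exhaustiveness of Table~\ref{TableHopfTetra}, but you could tighten the wording there. Similarly, for exclusion~\eqref{HZ2K1} the paper's one-line reason is that every cyclic subgroup of $\tetra$ \emph{except} $\ZZ_2(\kappa)$ is the projection of a cyclic isotropy subgroup of $\tetra\times\Ss^1$; your longer discussion arrives at the same place.
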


\begin{proof}
The result follows by inspection of Tables~\ref{TableHopfTetra} and \ref{tableTetraHK}, we discuss here why these pairs do not arise in a primary Hopf bifurcation.
Case \eqref{HKD2} refers to a non-trivial isotropy subgroup $K\subset\tetra$ for which $N(K)\ne K$.
For the group $\tetra$, there are two non-trivial isotropy subgroups in this situation,  as can be seen in 
Table~\ref{tableTetraHK}.
The first one, $K=\mathbb{D}_2$, is  not an isotropy subgroup of $\tetra\times\Ss^1$,
so the pair $H=K=\mathbb{D}_2$ does not occur in Table~\ref{TableHopfTetra} as a Hopf bifurcation from the trivial solution in a normal form with symmetry $\tetra$.
The second subgroup, $K=\ZZ_2(\kappa)$ occurs as an isotropy subgroup  of $\tetra\times\Ss^1$ with four-dimensional fixed-point subspace.
As we have seen in \ref{secFixZ2kTil}, in this subspace there are periodic solutions with $K=H=\ZZ_2(\kappa)$ arising through a Hopf bifurcation with submaximal symmetry. 
 On the other hand, the normaliser of $\ZZ_2(\kappa)$ corresponds to a $\CC$-axial subgroup of $\tetra\times\Ss^1$, so there is a Hopf bifurcation from the trivial solution with $H=N(K)$. 

Case \eqref{HZ2K1} concerns the situation when $K=\one$. 
All the cyclic subgroups $H\subset\tetra$, with the exception of $\ZZ_2(\kappa)$,  are the projection into $\tetra$ of cyclic isotropy subgroups of $\tetra\times\Ss^1$, so they correspond to primary Hopf bifurcations. 
\end{proof}

Another reason why the   pair $H=K=\mathbb{D}_2$  does not occur at a primary Hopf bifurcation is the following:  a non-trivial periodic solution in $\fix(\mathbb{D}_2)$ has the form $X(t)=(z(t),0,0)$.
Then $Y(t)=C^2RCX(t)=-X(t)$ is also a solution contained in the same plane. If the origin is inside $X(t)$ then the curves $Y(t)$ and $X(t)$ must intersect, so they coincide as curves, and this means that $C^2RC$ is a spatio-temporal symmetry of   $X(t)$.
Hence, if $H=K=\mathbb{D}_2$, then $X(t)$ cannot encircle the origin, and hence it cannot arise from a Hopf bifurcation from the trivial equilibrium. Of course it may originate at a Hopf bifurcation from another equilibrium.
The argument does not apply to the other subgroup $K$ with $N(K)\ne K$, because in this case 
$\dim\fix(K)=4$ and indeed, for some values of the parameters in the normal form,  there are primary Hopf bifurcations into solutions whith $H=K=\ZZ_2(\kappa)$.

 The second case in Proposition~\ref{propPrimaryTetra} is more interesting: if $X(t)=(z_1(t),z_2(t),z_3(t))$ is a $\rho$-periodic solution of a $\tetra$-invariant differential equation, with $H=\ZZ_2(\kappa)$, $K=\one$, then for some $\theta\ne 0 \pmod{\rho}$ and for all $t$ we have $z_1(t+\theta)=z_1(t)$ and $z_3(t+\theta)=z_2(t)=z_2(t+2\theta)$.

If $z_2(t)$ and $z_3(t)$ are identically zero, then  $\kappa\in K$, contradicting our assumption.
If $z_1(t)\equiv 0$ and $z_2(t)$ and $z_3(t)$ are both non-zero, then $X(t)$  cannot be obtained from the $\tetra\times\Ss^1$ action, because in this case it would be $X(t)\in\fix(\ZZ_2(e^{\pi i}R))$, hence $R\in H$.
The only other possibility is that  to have all coordinates of $X(t)$ not zero, 
$X(t)=(z_1(t),z_2(t),z_2(t+\theta))$ with $2\theta=\rho$, 
with a $\theta$-periodic $z_1$ and $z_2(t)$, and $z_3(t)$ having twice the period of $z_1(t)$.
In this case, if $X(t)$ would bifurcate from an equilibrium in the normal form \eqref{normal form}, at the bifurcation point the eigenvalues would be $\pm 2\pi i/\theta$ and $\pm \pi i/\theta$, a 2-1 resonance.
This last situation is not compatible with  the normal form \eqref{normal form}, where the linearisation at the origin is a (complex) multiple of the identity.

\begin{prop}\label{propPrimaryOcta}
For the representation of the group $\OO$ on $\RR^6\sim\CC^3$, all 
pairs of  subgroups $H,K$ satisfying the conditions of  the $H~\mathrm{mod}~K$ Theorem,
 with $H\ne\one$, occur as spatio-temporal symmetries of periodic solutions arising through a primary Hopf bifurcation, except for the following pairs:
\begin{enumerate}
\item\label{Z4}
$H=K=\ZZ_4$, generated by $\{TC^2\}$; 
\item\label{Z3}
$H=K=\ZZ_3$ generated by $\{C\}$; 
\item\label{Z2}
 $H=K=\ZZ_2$ generated by $\{T^3C^2\}$.
\end{enumerate}
\end{prop}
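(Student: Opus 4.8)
The plan is to mirror the argument of Proposition~\ref{propPrimaryTetra}, comparing Table~\ref{TableHopfCube} (symmetries realised at primary Hopf bifurcation for $\OO$) with Table~\ref{tableOctaHK} (pairs $H,K$ allowed by the $H~\mathrm{mod}~K$ theorem for $\OO$). By Lemma~\ref{lema pairs H K} and the discussion following it, for $\Gamma=\OO$ \emph{all} non-trivial isotropy subgroups have two-dimensional fixed-point subspaces, so the only pairs with $K\neq\one$ allowed by the theorem are, for each such $K$, the pairs $H=K$ and $H=N(K)$ with $N(K)\in\{\mathbb{D}_4,\mathbb{D}_3,\mathbb{D}_2\}$. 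The pairs with $H=N(K)$ are exactly the $\mathbf{C}$-axial rows (b), (c), (e) of Table~\ref{TableHopfCube}, hence realised. For $K=\one$ the hypothesis $H\neq\one$ leaves the cyclic subgroups $H\subset\OO$; as noted after Lemma~\ref{lema pairs H K}, every cyclic subgroup of $\OO$ except one is an isotropy subgroup of $\OO\times\Ss^1$ (using that $\ZZ_4(T)$ is conjugate to $\ZZ_4(TC^2)$ and $\ZZ_2(T^2C^2TC)$ to $\ZZ_2(T^3C^2)$), and each such appears as rows (d), (f), (g), (h) of Table~\ref{TableHopfCube}. So the only candidates for non-realisability are the three pairs $H=K$ with $K$ a two-dimensional isotropy subgroup, namely $\ZZ_4(TC^2)$, $\ZZ_3(C)$, $\ZZ_2(T^3C^2)$.

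It then remains to show these three pairs genuinely do not occur. For each, $\dim\fix(K)=2$, so a non-trivial periodic solution $X(t)$ in $\fix(K)$ is a closed curve in a plane. The argument is the one already used for $H=K=\mathbb{D}_2$ in the remark after Proposition~\ref{propPrimaryTetra}: if $X(t)$ arises from a primary Hopf bifurcation it encircles the origin, and for each of these $K$ there is an element $\gamma\in N(K)$, $\gamma\notin K$, acting on the two-dimensional space $\fix(K)$ as a nontrivial rotation or as $-\mathrm{Id}$ (since $N(K)/K$ acts faithfully and orthogonally on the plane $\fix(K)$, and $N(K)/K\cong\ZZ_2$ here). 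Then $\gamma X(t)$ is another solution curve encircling the origin in the same plane; two such closed curves must intersect, hence coincide as sets, so $\gamma$ is a spatio-temporal symmetry of $X(t)$, forcing $\gamma\in H$ and contradicting $H=K$. Concretely one checks from Table~\ref{tableOctaHK} that $N(K)=\mathbb{D}_4$, $\mathbb{D}_3$, $\mathbb{D}_2$ respectively, so such a $\gamma$ exists in each case.

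An alternative, cleaner route for all three at once: the $\mathbf{C}$-axial subgroups of $\OO\times\Ss^1$ with fixed-point subspace contained in a given $\fix(K)$ are those in rows (b)--(f) of Table~\ref{table isotropy subgroups tetrahedral+k}, and by the Equivariant Hopf Theorem the branch bifurcating into $\fix(K)$ carries the full symmetry $N(K)$ (equivalently, $H=N(K)$ for these two-dimensional cases, as recorded in the (b), (c), (e) rows); there is no mechanism in the cubic normal form \eqref{normal form} producing a branch in a two-dimensional $\fix(K)$ with \emph{less} than the generic symmetry, since the whole plane is forced to be invariant and the restricted equation is the standard planar Hopf normal form. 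I would present the geometric (encircling) argument as the main one, since it is self-contained and matches the style of the tetrahedral case. The main obstacle is bookkeeping: verifying that for each of the three $K$ the quotient $N(K)/K$ is $\ZZ_2$ and that its generator acts on $\fix(K)$ without a fixed direction (so the intersection argument applies) — this requires writing the action of $T^2$, $T^2C^2T$, $TC^2TC^2$ on the respective planes $\{(z,0,0)\}$, $\{(z,z,z)\}$, $\{(0,z,z)\}$ and checking it is $-\mathrm{Id}$ or an irrational-angle rotation, which is routine but must be done.
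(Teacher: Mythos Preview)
Your proposal is correct and follows the same table-comparison strategy as the paper's own proof. The one substantive difference is in how you exclude the three pairs $H=K$: the paper's proof simply notes that none of $\ZZ_4(TC^2)$, $\ZZ_3(C)$, $\ZZ_2(T^3C^2)$ is an isotropy subgroup of $\OO\times\Ss^1$ (so no primary Hopf branch can have spatio-temporal group $\Sigma=K\times\{0\}$), whereas you use the geometric encircling argument. That argument does appear in the paper, but only as a remark \emph{after} the proof, not as the proof itself. Your route is slightly longer and requires the small verification you flag at the end (that the generator of $N(K)/K$ acts as $-\mathrm{Id}$ on the complex line $\fix(K)$, which follows at once from the fact that $\OO$ acts $\CC$-linearly on $\CC^3$), but it has the advantage of being self-contained and of explaining \emph{why} such a branch cannot occur, rather than merely that it does not appear in the list. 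One minor slip: the remark after Lemma~\ref{lema pairs H K} that ``all but one of the cyclic subgroups of $\OO$ are isotropy subgroups'' refers to isotropy in $\OO$, not in $\OO\times\Ss^1$; in the paper's proof the relevant statement is that \emph{every} cyclic $H\subset\OO$ is the projection of a cyclic isotropy subgroup of $\OO\times\Ss^1$, which is what gives rows (d), (f), (g), (h).
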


\begin{proof}
As in Proposition~\ref{propPrimaryTetra}, the result follows comparing Tables~\ref{TableHopfCube} and \ref{tableOctaHK}.

All the cyclic subgroups $H\subset\OO$ are the projection into $\OO$ of cyclic isotropy subgroups $\Sigma$ of $\OO\times\Ss^1$, so the pairs $H,K=\ZZ_n,\one$ correspond to primary Hopf bifurcations:  for $\ZZ_4$ and $\ZZ_3$  the subgroup $\Sigma\subset\OO\times\Ss^1$ is $\CC$-axial, whereas for the  subgroups of $\OO$ of order two,  the subspace $\fix(\Sigma)$ is four-dimensional and has been treated in \ref{FixPiR} and \ref{secFixZ2kTil}  above.

All the non-trivial isotropy subgroups $K$ of $\OO$ satisfy $N(K)\ne K$, so they are candidates for cases where $H=K$ does not occur. This is indeed the case, since they are not isotropy subgroups of $\OO\times\Ss^1$.
\end{proof}

The pairs $H,K$ in Proposition~\ref{propPrimaryOcta},
that are not symmetries of solutions arising through primary Hopf bifurcation, are of the form $H=K$, where there would be only spatial symmetries. 
Here, $\dim\fix(K)=2$ for all cases. 
As remarked after the proof of Proposition~\ref{propPrimaryTetra}, the origin cannot lie inside a
closed trajectory with these symmetries, hence they cannot arise at a  Hopf bifurcation from the trivial equilibrium.
In a $\OO$-equivariant differential equation they may only bifurcate from a non-trivial equilibrium.

For the larger group $\cubo$ there are even more possibilities for the $H~\mathrm{mod}~K$ Theorem, and hence, even more of them will not occur as primary Hopf bifurcations:

\begin{prop}\label{propPrimaryCube}
For the representation of the group $\cubo$ on $\RR^6\sim\CC^3$, 
 there are 11 pairs $H,K$ with $K\ne\one$ satisfying the conditions of  the $H~\mathrm{mod}~K$ Theorem,  that do not occur as spatio-temporal symmetries of periodic solutions arising through a primary Hopf bifurcation.
\end{prop}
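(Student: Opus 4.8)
The plan is to carry out exactly the same bookkeeping as in Propositions~\ref{propPrimaryTetra} and \ref{propPrimaryOcta}, but now comparing Table~\ref{TableHopfCube_I} (the spatio-temporal symmetries realised at primary Hopf bifurcation for $\cubo\times\Ss^1$) against Table~\ref{tableOcta-1} (the pairs $H,K$ allowed by the $H~\mathrm{mod}~K$ Theorem for $\cubo$). First I would list, for each non-trivial isotropy subgroup $K$ of $\cubo$ appearing in Table~\ref{tableOcta-1}, all the pairs $(H,K)$ permitted by Theorem~\ref{teorema H mod K}, and then strike out those that appear as a row of Table~\ref{TableHopfCube_I}. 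The count should come out to $11$.

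The key step is the case analysis by the dimension of $\fix(K)$. For the three isotropy subgroups with $\dim\fix(K)=2$, namely $K=\mathbb{D}_4$, $K=\mathbb{D}_3$, $K=\mathbb{D}_2$, condition~(c) of Theorem~\ref{teorema H mod K} forces $H=K$ or $H=N(K)$; since $N(K)=\langle K,-Id\rangle$ is strictly larger than $K$ in each case, this gives two candidate pairs per subgroup. In Table~\ref{TableHopfCube_I} only the $\CC$-axial rows (b), (c), (e) appear, and one checks these correspond to $H=K$ (the $\cubo\times\Ss^1$ isotropy subgroup projects isomorphically onto $K\subset\cubo$, because $-Id$ is not available as a spatial symmetry of a trajectory that encircles the origin — the same argument as after Proposition~\ref{propPrimaryTetra}). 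Hence for each of these three $K$ the pair $H=N(K)$ is \emph{not} realised: that is $3$ excluded pairs. The pair $H=K$ \emph{is} realised in each case, so nothing more is excluded there. Then for the two isotropy subgroups with $\dim\fix(K)=4$, namely $K=\ZZ_2=\langle-T^2C^2TC\rangle$ and $K=\widehat\ZZ_2=\langle-TC^2TC^2\rangle$: for the first, Table~\ref{tableOcta-1} lists three pairs ($H=K$ and two copies of $\mathbb{D}_2$), while Table~\ref{TableHopfCube_I} realises only $H=K$ (row (h), the submaximal branch of \ref{secFixZ2kTil}), excluding $2$ more; for the second, Table~\ref{tableOcta-1} lists seven pairs ($H=K$ plus six larger groups, one $\cong\mathbb{D}_4$ and five $\cong\mathbb{D}_2$), while Table~\ref{TableHopfCube_I} realises only $H=K$ (row (g), the submaximal branch of \ref{FixPiR}), excluding $6$ more. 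The total of excluded pairs with $K\ne\one$ is thus $3+2+6=11$, as claimed.

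The main obstacle I expect is verifying that the two four-dimensional fixed-point subspaces $\{(z_1,z_2,z_2)\}$ and $\{(0,z_1,z_2)\}$ really do not carry, for any parameter values in the normal form \eqref{normal form}, a periodic orbit with one of the larger symmetry groups $H\supsetneq K$; i.e. that the submaximal branches found in \ref{FixPiR} and \ref{secFixZ2kTil} genuinely have $H=K$ and not more. But this was already established there: the argument at the end of \ref{FixPiR} shows that when $r\notin\{0,1,\infty\}$ the only symmetries of $(\xi z,z,0)$ are those fixing the subspace $\{(z_1,z_2,0)\}$, and similarly in \ref{secFixZ2kTil} the generic intersection points of the hyperbola with the unit circle correspond to $r\ne 1$ and hence to orbits with no symmetry beyond $K$. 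The extra symmetries in the larger $H$'s of Table~\ref{tableOcta-1} all involve $-Id$ or an element of order $2$ acting with a phase shift, and as in the $\tetra$ discussion such a symmetry would either force the orbit not to encircle the origin (if it acts as a spatial symmetry, i.e. with $\theta=0$) or force a resonance incompatible with the linearisation at the origin being a multiple of the identity (if it acts with $\theta\ne 0$). So the substantive content is a restatement of the geometric arguments already given, and the proof reduces to the numerical tally above; I would present it as ``the result follows by comparing Tables~\ref{TableHopfCube_I} and \ref{tableOcta-1}, the excluded pairs being $H=N(K)$ for $K\in\{\mathbb{D}_4,\mathbb{D}_3,\mathbb{D}_2\}$ and the eight pairs $H\supsetneq K$ for $K\in\{\ZZ_2,\widehat\ZZ_2\}$, for the same reasons as in Propositions~\ref{propPrimaryTetra} and \ref{propPrimaryOcta}.''
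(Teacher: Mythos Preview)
Your proposal is correct and follows exactly the paper's approach: compare Tables~\ref{TableHopfCube_I} and \ref{tableOcta-1} and count the pairs that appear in the latter but not the former. The paper's own proof is a terse reference to this comparison together with the remark that $-Id$ lies in the normaliser of every subgroup of $\cubo$ but in no isotropy subgroup, whereas you carry out the tally $3+2+6=11$ explicitly; one small caution is that your parenthetical invocation of the argument after Proposition~\ref{propPrimaryTetra} is reversed (that argument shows an orbit encircling the origin in a two-dimensional $\fix(K)$ \emph{must} pick up the extra $N(K)$ symmetry, which is why $H=K$ rather than $H=N(K)$ is the delicate case), but this does not affect the bookkeeping since you are ultimately reading the realised pairs directly from Table~\ref{TableHopfCube_I}.
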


\begin{proof}
As in Proposition~\ref{propPrimaryTetra}, the result follows comparing Tables~\ref{TableHopfCube_I} and \ref{tableOcta-1}. Since $-Id$ is in the normaliser of every subgroup of $\cubo$ and does not belong to any isotropy subgroup, this increases dramatically the number of possible pairs $H,K$.
Here, all the pairs of  subgroups $H,K$ that occur as spatio-temporal symmetries of periodic solutions arising through a primary Hopf bifurcation, also 
 satisfy $H=K$, except for the pairs $H=\langle C\rangle$,  $K=\one$ and $H=\langle T\rangle$, $K=\one$.
\end{proof}

Note that the discussion of the pair $H=\ZZ_2(\kappa)$, $K=\one$ after Proposition~\ref{propPrimaryTetra} applies here to the pair $H=\ZZ_2(-T^2C^2TC)$, $K=\one$ since $\kappa=-T^2C^2TC$.

When a cyclic  subgroup of $\Gamma$ occurs as $H$ paired  with $K\ne\one$ for the $H~\mathrm{mod}~K$ Theorem, then 
  it may also  occur  paired with the trivial subgroup $\one$.
This is the case of $H=\langle \kappa\rangle$, for $\Gamma=\tetra$; 
of $H=\langle TC^2\rangle$ ( conjugate to $\langle T\rangle$), $H=\langle C\rangle$,  
and $H=\langle T^3C^2\rangle$ ( conjugate to $\langle TC^2TC^2\rangle$), for $\Gamma=\OO$;
and of $H=\ZZ_2(-T^2C^2TC)$, and 
$H=\widehat{\ZZ}_2=\langle-TC^2TC^2\rangle$ for $\Gamma=\cubo$.
 In all these cases, there may be periodic solutions with the same spatio-temporal symmetry and less spatial symmetry. In all these cases, only one of the two pairs occurs at a primary Hopf bifurcation.

For the group $\cubo$ there is also a non-trivial case where the same subgroup $H$ is paired with different subgroups  $K$,
as can be seen in Table~\ref{tableOcta-1}: the subgroup
$H=\ZZ_2(-T^2C^2TC^2)\oplus\widehat{\ZZ}_2$ occurs both with $K=\ZZ_2(-T^2C^2TC^2)$ and with $K=\widehat{\ZZ}_2$. In this case, none of the possibilities  occurs at a primary Hopf bifurcation.

\subsection*{ Acknowledgements}
We would like to thank the anonymous referees for helpful comments that have greatly improved the article, and in particular, for suggesting the addition of the analysis of the full symmetry group of the cube.
We also thank Ana Paula S. Dias for helpful discussions.
The research of both authors at Centro de Ma\-te\-m\'a\-ti\-ca da Universidade do Porto (CMUP)
 had financial support from
 the European Regional Development Fund through the programme COMPETE and
 from  the Portuguese Government through the Fun\-da\-\c c\~ao para
a Ci\^encia e a Tecnologia (FCT) under the project
 PEst-C/MAT/UI0144/2011.
 A.C. Murza was also supported by the grant
SFRH/ BD/ 64374/ 2009 of FCT.

\end{document}